\newtheorem{thm}{Theorem}
\newtheorem{cor}{Corollary}
\newtheorem{lemma}{Lemma}
\newtheorem{clm}{Claim}
\newtheorem{obs}{Observation}
\begin{document}

\title{VPG and EPG bend-numbers of Halin Graphs}

\author{Mathew C. Francis\fnref{ack}}
\ead{mathew@isicnennai.res.in}
\address{Computer Science Unit, Indian Statistical Institute, Chennai}
\fntext[ack]{Partially supported by the DST INSPIRE Faculty Award IFA12-ENG-21.}

\author{Abhiruk Lahiri}
\ead{abhiruk.lahiri@csa.iisc.ernet.in}
\address{Department of Computer Science and Automation, Indian Institute of Science, Bangalore}

\begin{abstract}
A piecewise linear simple curve in the plane made up of $k+1$ line segments, each of which is either horizontal or vertical, with consecutive segments being of different orientation is called a \emph{$k$-bend path}. Given a graph $G$, a collection of $k$-bend paths in which each path corresponds to a vertex in $G$ and two paths have a common point if and only if the vertices corresponding to them are adjacent in $G$ is called a \emph{$B_k$-VPG representation} of $G$. Similarly, a collection of $k$-bend paths each of which corresponds to a vertex in $G$ is called an \emph{$B_k$-EPG representation} of $G$ if any two paths have a line segment of non-zero length in common if and only if their corresponding vertices are adjacent in $G$. The \emph{VPG bend-number} $b_v(G)$ of a graph $G$ is the minimum $k$ such that $G$ has a $B_k$-VPG representation. Similarly, the \emph{EPG bend-number} $b_e(G)$ of a graph $G$ is the minimum $k$ such that $G$ has a $B_k$-EPG representation.
\emph{Halin graphs} are the graphs formed by taking a tree with no degree $2$ vertex and then connecting its leaves to form a cycle in such a way that the graph has a planar embedding. We prove that if $G$ is a Halin graph then $b_v(G) \leq 1$ and $b_e(G) \leq 2$. These bounds are tight. In fact, we prove the stronger result that if $G$ is a planar graph formed by connecting the leaves of any tree to form a simple cycle, then it has a VPG-representation using only one type of 1-bend paths and an EPG-representation using only one type of 2-bend paths.
\end{abstract}

\journal{arXiv.org}

\maketitle

\section{Introduction}
The notion of \textit{edge intersection graphs of paths on a grid} (EPG graphs) was first introduced by Golumbic, Lipshteyn and Stern~\cite{Golumbic}. A graph $G$ is said to be an EPG graph if its vertices can be mapped to paths in a grid graph (a graph with vertex set $\{(x,y)\colon x,y\in\mathbb{N}\}$ and edge set $\{(x,y)(x',y')\colon |x-x'|+|y-y'|=1\}$) in such a way that two vertices in $G$ are adjacent if and only if the paths corresponding to them share an edge of the grid graph. If in addition, none of the paths used have more than $k$ bends (90-degree turns), then $G$ is said to be a $B_k$-EPG graph. We could also think of the paths on the grid graph to be just piecewise linear curves on the plane and this gives rise to the following equivalent definition for EPG graphs. A \emph{$k$-bend path} is piecewise linear curve made up of $k+1$ horizontal and vertical line segments in the plane such that any two consecutive line segments of the curve are of different orientation. A graph $G = (V, E)$ is a $B_k$-EPG graph if there exists a $k$-bend path $P_u$ corresponding to each vertex $u\in V(G)$ such that any two paths $P_u$ and $P_v$ have a horizontal or vertical line segment of non-zero length in common if and only if $uv \in E(G)$. The collection of $k$-bend paths $\{P_u\}_{u\in V(G)}$ is said to be a \emph{$B_k$-EPG representation} of $G$. The \emph{EPG bend-number} $b_e(G)$ of a graph $G$ is the minimum $k$ such that $G$ has a $B_k$-EPG representation. Clearly, $B_0$-EPG graphs are the well known class of interval graphs. Several papers have explored the EPG bend-number of different classes of graphs. The EPG bend-number of trees and cycles have been determined in~\cite{Golumbic}. Bounds on the the EPG bend-number of planar graphs, outerplanar graphs and complete bipartite graphs have been determined in~\cite{HeldtP} and~\cite{Heldt}.

Later in 2012, Asinowski et al.~\cite{AsinG, Asin} introduced the study of \textit{vertex intersection graphs of paths on a grid} (VPG graphs). For defining VPG graphs, we will as before talk about $k$-bend paths in the plane, instead of dealing with paths on a grid graph. A collection of $k$-bend paths $\{P_u\}_{u\in V(G)}$ is said to be a $B_k$-VPG representation of a graph $G$, if for $u,v\in V(G)$, we have $uv\in E(G)$ if and only if $P_u$ and $P_v$ have at least one point in common. A graph is said to be simply a VPG graph if it is a $B_k$-VPG graph for some $k$. Several relationships between VPG graphs and other known graph classes have been studied in~\cite{AsinG}. VPG graphs are known to be equivalent to the known class of string graphs, which are the intersection graphs of simple curves in the plane~\cite{Asin}. $B_0$-VPG graphs are a special case of segment graphs (intersection graphs of line segments in the plane), which are known as $2$-DIR graphs. L-VPG graphs are intersection graphs of L-shaped paths, which is clearly a subclass of $B_1$-VPG graphs. The subclass of L-VPG graphs which have L-VPG representations in which no two paths cross (have a common internal point) are called L-contact graphs~\cite{ChapKU}. More results and a literature survey of graph classes that admit contact VPG representations can be found in~\cite{Aerts}.

A \emph{tree-union-cycle graph} is formed by taking a tree and then connecting its leaves to form a cycle in such a way that the graph has a planar embedding. A \emph{Halin graph} is a tree-union-cycle graph with no degree 2 vertices. These graphs were first introduced by Rudolf Halin in his study of minimally $3$-connected graphs \cite{Halin}.

In this paper we show that Halin graphs are $B_1$-VPG graphs as well as $B_2$-EPG graphs. In fact, we show the stronger results that any tree-union-cycle graph has a L-VPG representation and a $B_2$-EPG representation with one type of 2-bend paths. We also demonstrate Halin graphs that are not $B_0$-VPG graphs and Halin graphs that are not $B_1$-EPG graphs. Note that L-contact graphs are subclass of $2$-degenerate graphs \citep{ChapKU}. As Halin graphs have minimum degree three, it is not possible to obtain a L-contact representation for Halin graphs.

$B_1$-VPG representation of Halin graphs and IO-graphs (2-connected planar graphs in which the interior vertices form a (possibly empty) independent set) was studied in~\cite{Derka}. It was shown in that paper that all IO-graphs are in L-VPG and that all Halin graphs have a $B_1$-VPG representation in which every vertex other than one is represented using an L-shaped path and one vertex is represented using a \rotatebox[origin=c]{180}{L}-shaped path. As any IO-graph can be easily seen to be an induced subgraph of some tree union cycle graph, our results show that both these graph classes are subclasses of L-VPG.

\section{Preliminaries}
A simple graph $G = T \cup C$, where $T$ is a tree and $C$ is a simple cycle induced by all the leaves of the tree $T$ such that $G$ is planar, is called a {\it tree-union-cycle graph}. A {\it Halin graph} is a tree-union-cycle graph with no degree $2$ vertex. 

Let us consider a tree-union-cycle graph $G = T \cup C$, where the cycle $C$ is of length $k$. Let $C$ be $c_0c_1\dots c_{k-1}c_0$. Let $S = V(G) \setminus V(C)$ be the set of internal vertices of the tree $T$. Let us also assume the set $S$ is not a singleton set, i.e. the graph $G$ is not a wheel graph. Designate an internal vertex $r$ of $T$, to be the root of the tree $T$. Once a root is fixed, we define the following notations. 
\begin{enumerate}
\item For any two vertices $u, v \in T$, $u$ is said to be an ancestor of $v$ if $u$ lies on the path $rTv$. On the other hand if $v$ lies on the path $rTu$ then $u$ is called a descendant of $v$. If $uv \in T$ and $v$ is a descendant of $u$ then we often call $v$ is a child of $u$ and $u$ is the parent of $v$.
\item For any vertex $u \in T$, let $L^r(u)$ be the set of all leaves that are descendants of $u$. If $u$ is a leaf then define $L^r(u) = \{ u\}$. Clearly if $u$ is a descendant of $v$ then $L^r(u) \subseteq L^r(v)$. 
\item For each vertex $u \in T$, define the height of the vertex $h^r(u)$ as the length of the path $rTu$. Let $hca(u, v) = \min\{ h^r(x) \colon x \in uTv\}$.
\end{enumerate}
\begin{lemma}
\label{lem: ancdec}
Let $u, v \in S$, then $u$ is an ancestor of $v$ or $v$ is an ancestor of $u$ if and only if $L^r(u) \cap L^r(v) \neq \emptyset$.
\end{lemma}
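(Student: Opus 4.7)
The plan is to prove both directions separately, and each is essentially a consequence of the structure of a rooted tree.

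For the forward direction, suppose without loss of generality that $v$ is a descendant of $u$ (the other case is symmetric, and the case $u=v$ is trivial). As noted in item~(2) of the preliminaries, the inclusion $L^r(v) \subseteq L^r(u)$ holds. So it suffices to observe that $L^r(v) \neq \emptyset$. Since $v \in S$, the vertex $v$ is an internal vertex of $T$, so it has at least one child in the rooted tree. Following any downward path from $v$ eventually reaches a leaf of $T$, and this leaf belongs to $L^r(v)$. Hence $\emptyset \neq L^r(v) \subseteq L^r(u) \cap L^r(v)$.

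For the reverse direction, suppose $L^r(u) \cap L^r(v) \neq \emptyset$, and pick a leaf $\ell$ in this intersection. By the definition of $L^r$, both $u$ and $v$ lie on the path $rT\ell$ from the root $r$ to $\ell$. Since $rT\ell$ is a simple path in $T$, its vertices are linearly ordered by distance from $r$; the vertex closer to $r$ among $u$ and $v$ is therefore an ancestor of the other on this very path. This gives the desired conclusion.

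There is no real obstacle: the lemma is essentially a restatement of how ancestor relations and leaf-descendant sets interact in a rooted tree. The only mild subtlety is ensuring $L^r(v) \neq \emptyset$ in the forward direction, which uses the hypothesis $v \in S$ (so $v$ is internal and must have some leaf descendant).
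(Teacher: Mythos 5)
Your proof is correct and follows essentially the same argument as the paper: the forward direction via $L^r(v)\subseteq L^r(u)$, and the reverse direction by observing that a common descendant leaf forces both $u$ and $v$ onto the path from $r$ to that leaf. The only difference is that you explicitly verify $L^r(v)\neq\emptyset$, a detail the paper leaves implicit.
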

\begin{proof}
Let $u$ is an ancestor of $v$. Clearly $L^r(v) \subseteq L^r(u)$. Hence $L^r(u) \cap L^r(v) \neq \emptyset$. Similarly it can be shown if $v$ is an ancestor of $u$ then $L^r(u) \cap L^r(v) \neq \emptyset$. Conversely, let $L^r(u) \cap L^r(v) \neq \emptyset$. Hence there exists $c_i \in L^r(u) \cap L^r(v)$. This implies that $u$ and $v$ both are on the path $c_iTr$. Hence one of them is an ancestor of the other. This completes the proof.
\end{proof}

\begin{obs}
For any $u\ (\neq r) \in S$, $L^r(u)$ does not contain all the leaves of $T$. 
\end{obs}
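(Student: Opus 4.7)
The plan is to show that $r$ itself has a leaf descendant that lies outside $L^r(u)$, by exploiting the fact that $u \neq r$ forces the path from $r$ to $u$ to enter exactly one of $r$'s child-subtrees, while $r$ has at least two children available.

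First, I would fix $u \in S$ with $u \neq r$ and consider the unique path in $T$ from $r$ to $u$. Since $u \neq r$, this path has positive length, so it leaves $r$ through some child $v_1$ of $r$ (where possibly $v_1 = u$). All descendants of $u$ lie inside the subtree rooted at $v_1$, so in particular $L^r(u) \subseteq L^r(v_1)$.

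Next, I would argue that $r$ has at least one other child. By definition, $r \in S$ is an internal vertex of $T$, so it is not a leaf, hence $\deg_T(r) \geq 2$; since $r$ is the root, every neighbor of $r$ in $T$ is a child of $r$, so $r$ has at least two children. Pick a child $v_2 \neq v_1$. The subtree of $T$ rooted at $v_2$ contains at least one leaf of $T$ (namely $v_2$ itself if $v_2$ is a leaf, or else a leaf reached by descending from $v_2$, since every finite rooted tree has leaves in every subtree). Call such a leaf $c$. Because the subtrees rooted at $v_1$ and $v_2$ are vertex-disjoint, $c \notin L^r(v_1)$, and therefore $c \notin L^r(u)$. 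Thus $L^r(u)$ misses at least one leaf of $T$, which is exactly what the observation asserts.

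There is no serious obstacle here; the only thing to be careful about is to invoke the definition of ``internal vertex'' to get $\deg_T(r) \geq 2$ and to use the fact that $r$ being the root makes all of its $T$-neighbors children, ensuring the existence of the sibling subtree from which the missing leaf is extracted. The non-singleton assumption on $S$ from the setup is not needed for the statement itself (the claim is vacuous if $S = \{r\}$), only for the naming conventions surrounding $r$.
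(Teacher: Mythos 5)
Your proof is correct and is essentially the paper's argument in direct (rather than contradiction) form: both hinge on $r$ being an internal vertex, hence of degree at least $2$, so that besides the child-subtree of $r$ containing $u$ there is a sibling subtree whose leaves lie outside $L^r(u)$ (the paper phrases this as: $L^r(u)=V(C)$ would force $\deg(r)=1$). Your write-up just makes explicit the details the paper's one-line proof leaves implicit.
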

\begin{proof}
For the sake of contradiction, assume if $L^r(u) = L^r(r) = V(C)$. Then it implies that the degree of $r$ is one, which contradicts the fact that $r$ is an internal vertex. 
\end{proof}

The following lemma can be easily seen.

\begin{lemma}
\label{lem: cyc}
Fix a vertex $r \in S$, as the root of the tree $T$. Then the vertices in $L^r(u)$ appear consecutively in the cyclic ordering $c_0, c_1, \dots ,c_{k-1}, c_0$.
\end{lemma}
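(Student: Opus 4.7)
The plan is to invoke planarity of $G = T \cup C$. The case $u = r$ is immediate since $L^r(r) = V(C)$, so I assume $u \neq r$ and argue by contradiction: suppose the leaves in $L^r(u)$ do not form a contiguous arc on $C$. Then there exist leaves $a, b \in L^r(u)$ and $c, d \in V(C) \setminus L^r(u)$ appearing in the cyclic order $a, c, b, d$ on $C$.

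First I would establish that the two paths $P_1 = aTb$ and $P_2 = cTd$ in $T$ are vertex-disjoint. Because $a$ and $b$ are both descendants of $u$, their lowest common ancestor is also a descendant of $u$ (or equal to $u$), so $P_1$ stays within the subtree of $T$ rooted at $u$. On the other hand, since neither $c$ nor $d$ lies in $L^r(u)$, Lemma~\ref{lem: ancdec} (applied after noting that adding $u$ would create a common leaf) implies that the lowest common ancestor of $c$ and $d$ is a proper ancestor of $u$ in $T$, so $P_2$ never enters the subtree rooted at $u$; in particular $u \notin P_2$ and the two paths share no vertex.

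Next I would run a Jordan-curve style argument. Fix a planar embedding of $G$. The cycle $C$ bounds a closed disk $D$, and because the only vertices of $T$ lying on $C$ are its leaves, all edges of $T$ (and in particular $P_1$ and $P_2$) may be taken to lie inside $D$. Since $P_1 \cap \partial D = \{a, b\}$, the path $P_1$ partitions $D$ into two subregions; the boundary of one contains the arc of $C$ from $a$ to $b$ through $c$, and the boundary of the other contains the arc through $d$. Hence $c$ and $d$ lie on boundaries of distinct subregions of $D \setminus P_1$, so the curve $P_2$, which connects $c$ to $d$ inside $D$ and is disjoint from $C$ except at its endpoints, must cross $P_1$ somewhere in its interior. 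This contradicts the vertex-disjointness of $P_1$ and $P_2$ proved above and completes the argument.

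The main obstacle is making the topological step rigorous; the combinatorial input (extracting the interleaved leaves $a, c, b, d$ and showing that $P_1$ and $P_2$ do not share a vertex) follows directly from the ancestor/descendant definitions already in the paper and from Lemma~\ref{lem: ancdec}.
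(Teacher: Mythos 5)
The paper offers no argument for this lemma at all (it is dismissed with ``can be easily seen''), so there is nothing to compare against line by line; you are supplying the missing proof. Your overall route --- extract an interleaved quadruple $a,c,b,d$ on $C$ with $a,b\in L^r(u)$ and $c,d\notin L^r(u)$, show the tree paths $aTb$ and $cTd$ are vertex-disjoint, and then contradict planarity by a Jordan-curve argument inside the disk bounded by $C$ --- is the natural way to make the claim rigorous, and some appeal to planarity is genuinely unavoidable: for a tree whose leaves are joined into a cycle in an arbitrary (non-planar) order the statement is simply false. So the strategy is sound.

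Two steps need repair. First, your justification of the disjointness is wrong as stated: from $c,d\notin L^r(u)$ it does not follow that the lowest common ancestor of $c$ and $d$ is a proper ancestor of $u$; if, say, $u$ and $v$ are siblings and $c,d$ are leaves below $v$, that common ancestor is $v$ or a descendant of $v$, which is incomparable to $u$. The parenthetical appeal to Lemma~\ref{lem: ancdec} also does not parse, since that lemma compares two internal vertices and it is unclear what ``adding $u$'' means. What you actually need is easier and true: every vertex of $cTd$ is an ancestor of $c$ or an ancestor of $d$, and if such a vertex belonged to the subtree rooted at $u$ then $c$ or $d$ would be a descendant of $u$, i.e.\ in $L^r(u)$, a contradiction; equivalently, the subtree rooted at $u$ meets the rest of $T$ only through $u$, so a simple path with both endpoints outside it never enters it. Hence $cTd$ avoids the subtree containing $aTb$. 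Second, ``all edges of $T$ may be taken to lie inside $D$'' needs one more sentence: the internal vertices of $T$ induce a connected subtree disjoint from the closed curve $C$, so they all lie in the same one of the two regions determined by $C$, and no edge of $T$ can cross $C$ in an embedding; taking $D$ to be the closed region containing them (working on the sphere if necessary) gives exactly the situation your Jordan-curve step uses. With these two repairs the argument is complete; the topological step itself (an arc of $D$ with endpoints $a,b$ on $\partial D$ separates the two boundary arcs, so the disjoint arc from $c$ to $d$ cannot exist) is standard and fine.
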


\begin{lemma}
\label{lem: lin}
Let $c_i, c_{i+1}$ be two leaves and $r$ be an internal vertex on the path $c_iTc_{i+1}$. If we fix root as $r$, then for any $u \in S$, the vertices in $L^r(u)$ appear consecutively in the linear ordering $c_{i+1}, c_{i+2}, \dots,$ $ c_{k-1}, c_0, c_1, \dots , c_i$.
\end{lemma}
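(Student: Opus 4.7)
The plan is to reduce the claim about the linear ordering to the cyclic version already proved in Lemma~\ref{lem: cyc}, and then argue that cutting the cycle between $c_i$ and $c_{i+1}$ does not split any of the blocks $L^r(u)$. First I would dispose of the trivial case $u=r$: here $L^r(r)$ is the full leaf set $V(C)$, which is a consecutive block in any ordering. So from now on assume $u\in S$ with $u\neq r$. By Lemma~\ref{lem: cyc}, $L^r(u)$ appears consecutively along the cycle $c_0c_1\dots c_{k-1}c_0$. When we cut the cycle between $c_i$ and $c_{i+1}$ to form the linear order $c_{i+1},c_{i+2},\dots,c_{k-1},c_0,\dots,c_i$, a cyclic arc remains contiguous unless it straddles the cut, and an arc straddles the cut if and only if it contains \emph{both} $c_i$ and $c_{i+1}$. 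So the whole problem reduces to the following key claim.

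\textbf{Key claim.} If $u\in S$ and $u\neq r$, then $L^r(u)$ cannot contain both $c_i$ and $c_{i+1}$.

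I expect this to be the main step. The idea is as follows. Suppose for contradiction that $c_i,c_{i+1}\in L^r(u)$. Then $u$ is an ancestor of both leaves, so $u$ lies on both paths $rTc_i$ and $rTc_{i+1}$. Since $r$ is the root and, by hypothesis, $r$ itself lies on $c_iTc_{i+1}$, the unique tree path from $c_i$ to $c_{i+1}$ factors as $c_i\to\dots\to r\to\dots\to c_{i+1}$; equivalently, $c_i$ and $c_{i+1}$ lie in distinct components of $T-r$, i.e.\ in two different subtrees hanging off $r$. Consequently the only vertex common to $rTc_i$ and $rTc_{i+1}$ is $r$ itself, which forces $u=r$, contradicting the assumption $u\neq r$. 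Combining this with Lemma~\ref{lem: cyc} finishes the proof.

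The whole argument is essentially combinatorial: the cyclic-to-linear step is a routine observation about arcs and cuts, and the only non-trivial ingredient is the tree-path remark that the hypothesis on $r$ puts $c_i$ and $c_{i+1}$ into distinct subtrees of $r$ and therefore makes $r$ the unique common ancestor of the two leaves. That single structural observation is the main obstacle; everything else is bookkeeping.
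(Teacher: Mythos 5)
Your proposal is correct and follows essentially the same route as the paper: reduce to Lemma~\ref{lem: cyc}, note that non-consecutiveness after cutting forces $c_i,c_{i+1}\in L^r(u)$ with $u\neq r$, and then use the hypothesis $r\in c_iTc_{i+1}$ to conclude that $rTc_i$ and $rTc_{i+1}$ meet only in $r$, so no such $u$ exists. The paper phrases this last step as ``$c_iTr$ and $c_{i+1}Tr$ have no internal vertex in common,'' which is the same structural fact you isolate as your key claim.
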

\begin{proof}
Suppose there exists a vertex $u\in S$ such that the vertices in $L^r(u)$ do not appear consecutively in the linear ordering $c_{i+1}, c_{i+2}, \dots, c_{k-1}, c_0, c_1, \dots , c_i$. Then it follows from Lemma~\ref{lem: cyc} that $c_{i},c_{i+1}\in L^r(u)$. If $u = r$, then all leaves are descendant of $u$. Hence $u \neq r$. If $v$ is a vertex such that $c_{i+1}\in L^r(v)$, then $v$ is on the path $c_{i+1}Tr$. Similarly, if $w$ is a vertex such that $c_i\in L^r(w)$, then $w$ is on the path $c_iTr$. Therefore, $u$ is an internal vertex of both $c_{i+1}Tr$ and $c_iTr$. 
Since $r \in c_iTc_{i+1}$, $c_iTr$ and $c_{i+1}Tr$ have no internal vertex in common. This implies that such a $u$ cannot exist.
\end{proof}

Now we prove the following lemma which is used to define a linear ordering of the vertices on the cycle. 

\begin{lemma}
\label{lem: path}
Fix a vertex $r \in S$ as the root of $T$. Then there exists an index $i$, such that all the internal vertices except one on the path $c_iTc_{i+1}$ are of degree two.
\end{lemma}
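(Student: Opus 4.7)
The plan is to locate a suitably ``deep'' branching vertex $v^{*}$ whose subtrees below it are all simple paths, and then take $c_i,c_{i+1}$ to be the two leaves hanging off adjacent children of $v^{*}$.

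First I would call a vertex $v \in S$ a \emph{branching vertex} if it has at least two children in $T$ rooted at $r$. Note that $r$ itself is such a vertex, because $r\in S$ has degree $\geq 2$ in $T$ and has no parent, so the set of branching vertices is non-empty. Choose $v^{*}$ to be a branching vertex that maximises $h^r(v^{*})$, and let $v_1,\dots,v_m$ ($m\geq 2$) be its children, indexed according to the linear order given by the planar embedding of $G$ around $v^{*}$.

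By the maximality of $h^r(v^{*})$, no strict descendant of $v^{*}$ is branching. Hence every internal vertex of $T$ strictly below $v^{*}$ has exactly one child, so the subtree rooted at each $v_j$ is a single (possibly trivial) path from $v_j$ down to a unique leaf $\ell_j$, along which every internal vertex has degree exactly $2$ in $T$. We have $L^r(v^{*})=\{\ell_1,\dots,\ell_m\}$, so Lemma~\ref{lem: cyc} implies that these leaves occupy a consecutive arc in the cyclic ordering $c_0,c_1,\dots,c_{k-1}$; the planar embedding then forces them to appear along this arc in the order $\ell_1,\ell_2,\dots,\ell_m$ (or its reverse). In particular $\ell_1$ and $\ell_2$ are consecutive in $C$, so we may pick the index $i$ with $c_i=\ell_1$ and $c_{i+1}=\ell_2$.

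The path $c_iTc_{i+1}$ is then the concatenation of the degree-$2$ chain from $\ell_1$ up through $v_1$ to $v^{*}$, the vertex $v^{*}$ itself, and the degree-$2$ chain from $v^{*}$ down through $v_2$ to $\ell_2$, so all its internal vertices except $v^{*}$ have degree $2$ in $T$. The only subtle step is the use of planarity to align the cyclic order of the children of $v^{*}$ with the order of $\ell_1,\dots,\ell_m$ along $C$; the potentially awkward case $v^{*}=r$ with exactly two children is automatically excluded, since it would force $T$ to have only two leaves, contradicting $|V(C)|\geq 3$.
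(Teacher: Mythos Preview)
Your argument is correct and arrives at the same conclusion, but the route differs from the paper's. The paper selects the pair $c_i,c_{i+1}$ directly by maximising $h^r(hca(c_i,c_{i+1}))$ over all consecutive leaf pairs, sets $u=hca(c_i,c_{i+1})$, and then argues by contradiction: a degree~$\geq 3$ internal vertex $v$ on $uTc_i$ would, via Lemma~\ref{lem: cyc}, force $c_{i-1}\in L^r(v)$ and hence produce a consecutive pair with a strictly deeper common ancestor. You instead pick the object from the tree side, taking a branching vertex $v^*$ of maximum height and observing that all subtrees below it are bare paths; your $v^*$ and the paper's $u$ are in fact the same vertex (each is a branching vertex of maximal height, and each deepest branching vertex is the $hca$ of some consecutive leaf pair), so the two proofs are dual rather than unrelated.

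One simplification is available in your version: the appeal to the planar rotation at $v^*$ to conclude that $\ell_1$ and $\ell_2$ are consecutive on $C$ is not needed. Once you know $|L^r(v^*)|=m\geq 2$ and, by Lemma~\ref{lem: cyc}, that $L^r(v^*)$ occupies a consecutive arc of $C$, there automatically exist two leaves $\ell_a,\ell_b\in L^r(v^*)$ that are consecutive on $C$; since they lie in distinct child-subtrees of $v^*$, the path $\ell_aT\ell_b$ passes through $v^*$ and every other internal vertex on it has degree two. This bypasses the ``only subtle step'' you flagged and makes the proof entirely self-contained from Lemma~\ref{lem: cyc}.
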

\begin{proof}
Let us consider two consecutive leaves $c_i$ and $c_{i+1}$ of $T$ such that $h^r(hca(c_i, c_{i+1}))$ is maximum. We claim that this pair of leaves $c_i$ and $c_{i+1}$ satisfy the statement of the lemma. Let $u = hca(c_i, c_{i+1})$. If $u$ is the common parent of $c_i$ and $c_{i+1}$, then our claim holds trivially. 

For the sake of contradiction, assume $u$ is not the common parent of $c_i$ and $c_{i+1}$ and at least one of the paths from $uTc_i$ and $uTc_{i+1}$ has an internal vertex of degree at least three. Let that vertex be $v$, lying on the path $uTc_i$. Consider $L^r(v)$. Clearly $c_i$ is in $L^r(v)$. We know that the degree of $v$ is at least three. So $v$ has at least one leaf other than $c_i$ as its descendant. From Lemma~\ref{lem: cyc}, we know that the vertices in $L^r(v)$ appear consecutively in the cyclic ordering of leaves. So either of $c_{i-1}$ or $c_{i+1}$ is in $L^r(v)$. We have $c_{i+1} \notin L^r(v)$. Otherwise, $v$ is a common ancestor of both $c_i$ and $c_{i+1}$ with $h^r(v) > h^r(u)$, which contradicts the fact that $u = hca(c_i, c_{i+1})$. So $c_{i+1} \notin L^r(v)$. Therefore, we can conclude that $c_{i-1} \in L^r(v)$. But then we have found a pair of leaves $c_{i-1}$ and $c_i$ such that $h^r(hca(c_{i-1}, c_i)) \geq h^r(v) > h^r(u)$. This contradicts our initial choice of the pair of leaves $c_i, c_{i+1}$. So there is no internal vertex with degree more than two on the path $uTc_i$. With a similar argument we can conclude that there is no internal vertex with degree more than two on the path $uTc_{i+1}$. This concludes the proof.
\end{proof}

\section{Halin graphs are L-VPG graphs}
Let $c_i, c_{i+1}$ be two leaves satisfying Lemma~\ref{lem: path}. Define, $b_j = c_{(i+j)\mod k}$, for $0 \leq j \leq k-1$. Fix $r^\prime = hca(c_i, c_{i+1})$ to be the new root of $T$.

The following observations are easy consequences of our choice of root and Lemma~\ref{lem: lin}.

\begin{obs}
\label{obs:oneleaf}
Every internal vertex of both the paths $b_0Tr^\prime$ and $b_1Tr^\prime$ has exactly one descendant leaf.
\end{obs}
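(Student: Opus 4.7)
The plan is to combine Lemma~\ref{lem: path} with a careful reading of what happens after re-rooting at $r'$. First, I would observe that since $r' = hca(c_i, c_{i+1})$ is by definition a common ancestor (in the old rooting) of both $c_i = b_0$ and $c_{i+1} = b_1$, it lies on the tree path $c_iTc_{i+1}$. Consequently this path decomposes as the concatenation $c_iTr' \cup r'Tc_{i+1} = b_0Tr' \cup r'Tb_1$.

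Second, I would invoke Lemma~\ref{lem: path}, which guarantees that every internal vertex of the path $c_iTc_{i+1}$ other than $r'$ itself has degree exactly $2$ in $T$. In particular, every non-endpoint vertex of $b_0Tr'$ is a degree-$2$ vertex of $T$, and similarly for $b_1Tr'$.

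The core step is then to read off $L^{r'}(v)$ for such a degree-$2$ vertex $v$ on $b_0Tr'$ after re-rooting at $r'$. Because $v$ has only two neighbours in $T$, one of them (the one on the $r'$ side of the path) must be its new parent and the other (on the $b_0$ side) its unique new child. Iterating this downwards along the chain of degree-$2$ vertices, the subtree of $v$ in the new rooting is exactly the sub-path of $b_0Tr'$ running from $v$ down to $b_0$, with no side branches. Hence $L^{r'}(v) = \{b_0\}$, a single leaf. A symmetric argument for the vertices of $b_1Tr'$ completes the observation.

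I do not foresee a serious obstacle: the argument is essentially a re-reading of Lemma~\ref{lem: path} through the lens of the new rooting. The only care required is light bookkeeping around the endpoints, namely treating $r'$ as an endpoint rather than as an internal vertex of these paths, and noting that the statement holds vacuously if $r'$ is the immediate parent of $b_0$ (respectively, $b_1$), in which case $b_0Tr'$ (respectively, $b_1Tr'$) has no internal vertices at all.
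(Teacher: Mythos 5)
Your argument is correct, and since the paper offers no proof of this observation (it is merely asserted to be an easy consequence of the choice of root), what you wrote is exactly the intended justification: the internal vertices of $b_0Tr'$ and $b_1Tr'$ have degree two in $T$, so after re-rooting at $r'$ each has a unique child and its subtree is the subpath down to $b_0$ (resp.\ $b_1$), giving exactly one descendant leaf. One small caveat: the bare statement of Lemma~\ref{lem: path} only says ``all internal vertices except one'' have degree two without identifying the exception, so your step ``every internal vertex other than $r'$ has degree two'' really relies on what the \emph{proof} of that lemma establishes for the chosen pair (namely that the exceptional vertex is $hca(c_i,c_{i+1})=r'$), which is precisely how the paper fixes $r'$, so this is a matter of citation rather than a gap.
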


\begin{obs}
\label{obs:lin}
For any $u \in S$, the vertices in $L^{r^\prime}(u)$ appear consecutively in the linear ordering $b_0, b_1, \dots,$  $ b_{k-1}$.
\end{obs}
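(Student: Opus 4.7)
The plan is to derive the observation from Lemma~\ref{lem: cyc} together with Observation~\ref{obs:oneleaf}. Lemma~\ref{lem: cyc} already gives that $L^{r'}(u)$ is consecutive in the cyclic ordering of the leaves, which after relabeling reads $b_0, b_1, \ldots, b_{k-1}, b_0$. The linear ordering in the statement is obtained from this cyclic one by cutting the arc between $b_{k-1}$ and $b_0$, so consecutivity in the linear ordering can fail only when the cyclically consecutive block wraps around this cut, that is, only when $L^{r'}(u)$ contains both $b_{k-1}$ and $b_0$ while still being a proper subset of $V(C)$.

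I would first dispose of the case $u = r'$: here $L^{r'}(r') = V(C)$, which is trivially consecutive in any ordering of $V(C)$. For the remaining case $u \in S \setminus \{r'\}$, I would assume for contradiction that $L^{r'}(u)$ contains both $b_{k-1}$ and $b_0$. Since $u$ is an ancestor of $b_0 = c_i$, it lies on the path $b_0 T r'$; since $u \in S$ is not a leaf, $u \neq b_0$; and by hypothesis $u \neq r'$. Hence $u$ is strictly interior to the path $b_0 T r'$, so Observation~\ref{obs:oneleaf} applies to $u$ and forces $|L^{r'}(u)| = 1$. This contradicts $\{b_{k-1}, b_0\} \subseteq L^{r'}(u)$.

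I do not expect a serious obstacle. The only subtlety is verifying that the single ``bad cut'' of the cyclic ordering is precisely the one between $b_{k-1}$ and $b_0$, and that the internal vertices along $b_0 T r'$ are exactly the vertices that could possibly witness a wrap-around. Both facts follow transparently from the definition of the $b_j$ and from the fact that $r' = hca(c_i, c_{i+1})$ was chosen using Lemma~\ref{lem: path}. Once this bookkeeping is in place, the whole argument reduces to a one-line appeal to Observation~\ref{obs:oneleaf}.
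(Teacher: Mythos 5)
Your argument is correct and is essentially the reasoning the paper intends: the paper states this observation without proof as an easy consequence of the choice of root and Lemma~\ref{lem: lin}, and your derivation (consecutivity in the cyclic order via Lemma~\ref{lem: cyc}, then ruling out a block wrapping across the $b_{k-1}$--$b_0$ cut because such a $u$ would be an interior vertex of $b_0Tr^\prime$ and hence have a single descendant leaf by Observation~\ref{obs:oneleaf}) is exactly the missing bookkeeping, with no circularity since Observation~\ref{obs:oneleaf} is independent of this statement.
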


Let us define an L-shaped curve on the plane as the set of points given by $\mathcal{L}([x_1, x_2], [y_1, y_2]) = \{(x, y) \vert x = x_1\ \mbox{and}\ y \in [y_1, y_2]\ \mbox{or}\ y = y_1\ \mbox{and}\ x \in [x_1, x_2]\}$. By definition, a graph is an L-VPG graph if one can associate an L-shaped curve $\mathcal{L}_u$ to each vertex $u$ of the graph such that $uv$ is an edge of the graph if and only if $\mathcal{L}_u \cap \mathcal{L}_v \neq \emptyset$. Now we show that $G$ is an L-VPG graph by defining the L-shaped curve $\mathcal{L}_u$ to be associated with each vertex $u \in V(G)$. For any vertex $u\in V(G)$ with $\mathcal{L}_u=\mathcal{L}([x_1,x_2],[y_1,y_2])$, we define $l_x(u)=x_1$, $r_x(u)=x_2$, $l_y(u)=y_1$ and $r_y(u)=y_2$. Define $h = \max \{ h^{r^\prime}(u) \vert u \in V(G)\}$. 

Define $\mathcal{L}_{b_0} = \mathcal{L}([1.5, k-1], [0, h-h^{r^\prime}(b_0) + 2])$.

Define $\mathcal{L}_{b_1} = \mathcal{L}([1, 2], [1, h-h^{r^\prime}(b_1) + 2])$.

Define $\mathcal{L}_{b_{k-1}} = \mathcal{L}([k-1, k], [0, h-h^{r^\prime}(b_{k-1}) + 2])$.

For every leaf $b_i$ other than $b_0, b_1$ or $b_{k-1}$, define $\mathcal{L}_{b_i} = \mathcal{L}([i, i+1], [1, h - h^{r^\prime}(b_i)+2])$.

For every internal vertex $v$, define $\mathcal{L}_v=([\min\{l_x(w)~\vert~w \in L^{r^\prime}(v)\},\max\{l_x(w)~\vert~w \in L^{r^\prime}(v)\}],[h-h^{r^\prime}(v)+1, h-h^{r^\prime}(v)+2])$.

Now we prove that the collection of L-shapes $\{\mathcal{L}_u\}_{u\in V(G)}$ forms a valid one bend $VPG$ representation of $G$.

We first state some observations that will be needed.

\begin{obs}
\label{obs: order}
For any $u \in S$ and leaves $b_i,b_j,b_k$, if $l_x(b_i) < l_x(b_j) < l_x(b_k)$ and $b_i, b_k \in L^{r^\prime}(u)$ then $b_j\in L^{r^\prime}(u)$.
\end{obs}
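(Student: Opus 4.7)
The plan is to compare the $l_x$-ordering of the leaves with the linear ordering provided by Observation~\ref{obs:lin}. Reading off the definitions, $l_x(b_1)=1$, $l_x(b_0)=1.5$, and $l_x(b_m)=m$ for each $m\in\{2,\dots,k-1\}$; thus, listed in increasing order of $l_x$, the leaves appear as $b_1, b_0, b_2, b_3, \dots, b_{k-1}$. This is the linear order of Observation~\ref{obs:lin} with exactly one transposition, namely $b_0\leftrightarrow b_1$. My strategy is to perform a short case analysis on $u$ and reduce to situations where the two orderings actually agree on $L^{r^\prime}(u)$, at which point Observation~\ref{obs:lin} finishes the argument.

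If $u=r^\prime$, then $L^{r^\prime}(u)=V(C)$ contains every leaf and the conclusion is trivial, so assume $u\neq r^\prime$. I will show that either $L^{r^\prime}(u)$ is a singleton or $L^{r^\prime}(u)\subseteq\{b_2,\dots,b_{k-1}\}$. If $b_0\in L^{r^\prime}(u)$, then $u$ lies on the path $b_0Tr^\prime$ and is an internal vertex of this path, so by Observation~\ref{obs:oneleaf} the vertex $u$ has exactly one descendant leaf, forcing $L^{r^\prime}(u)=\{b_0\}$. The symmetric argument gives $L^{r^\prime}(u)=\{b_1\}$ whenever $b_1\in L^{r^\prime}(u)$. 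In the singleton case, the hypothesis that two distinct leaves $b_i, b_k$ lie in $L^{r^\prime}(u)$ is vacuous and there is nothing to prove.

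It remains to handle the case $L^{r^\prime}(u)\subseteq\{b_2,\dots,b_{k-1}\}$. Here $l_x(b_i)\geq 2$, so $l_x(b_j)>l_x(b_i)\geq 2$ rules out $b_j\in\{b_0,b_1\}$, and therefore $b_i, b_j, b_k$ all lie in $\{b_2,\dots,b_{k-1}\}$. On this set the $l_x$-order and the linear order of Observation~\ref{obs:lin} coincide, so the chain $l_x(b_i)<l_x(b_j)<l_x(b_k)$ translates into $i<j<k$ in that common order; since $L^{r^\prime}(u)$ forms a consecutive block in this order and contains $b_i$ and $b_k$, it also contains $b_j$. I do not anticipate any real obstacle: the careful choice of root $r^\prime$ via Lemma~\ref{lem: path} was precisely designed so that $b_0$ and $b_1$ are the only descendant leaves of any ancestor strictly below $r^\prime$, which neutralises the single transposition between the $l_x$-order and the linear order.
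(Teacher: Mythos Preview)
Your argument is correct and follows essentially the same route as the paper's proof: both handle $u=r'$ trivially, invoke Observation~\ref{obs:oneleaf} to force $|L^{r'}(u)|=1$ whenever $b_0$ or $b_1$ is a descendant of $u\neq r'$, and then reduce to the range $\{b_2,\dots,b_{k-1}\}$ where the $l_x$-order matches the linear order of Observation~\ref{obs:lin}. The only cosmetic difference is that the paper phrases it as a proof by contradiction, whereas you argue directly and spell out a bit more carefully why $b_j\notin\{b_0,b_1\}$.
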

\begin{proof}
Suppose for the sake of contradiction that there exist $u\in S$ and leaves $b_i,b_j,b_k$ with $l_x(b_i)<l_x(b_j)<l_x(b_k)$ such that $b_i,b_k\in L^{r^\prime}(u)$ and $b_j\notin L^{r^\prime}(u)$. Clearly, this cannot be true if $u=r^\prime$ as $b_j\in L^{r^\prime}(r^\prime)$. Therefore, it must be the case that $u\neq r^\prime$.
If $u$ is an ancestor of $b_0$, then from Observation~\ref{obs:oneleaf}, we know that $|L^{r^\prime}(u)|=1$, which contradicts the fact that $b_i,b_k\in L^{r^\prime}(u)$. Therefore, $u$ cannot be an ancestor of $b_0$. For the same reason, $u$ also cannot be an ancestor of $b_1$. This means that $i\notin \{0,1\}$.
From our construction, we have $l_x(b_2) < l_x(b_3) < \dots < l_x(b_{k-1})$. Since $i\geq 2$ and $l_x(b_i)<l_x(b_j)<l_x(b_k)$, we have $i<j<k$. This implies we have three leaves $b_i,b_j,b_k$ with $i<j<k$ such that $b_i,b_k\in L^{r^\prime}(u)$ and $b_j\notin L^{r^\prime}(u)$. This contradicts Observation~\ref{obs:lin}.
\end{proof}

\begin{obs}
\label{obs:comparable}
Let $u,v\in V(G)$ be distinct vertices such that $u$ is an ancestor of $v$. Then,
\begin{enumerate}[(a)]
\item $l_y(v)\notin [l_y(u),r_y(u)]$,
\item $l_x(v)\in [l_x(u),r_x(u)]$, and
\item $l_y(u)\in [l_y(v),r_y(v)]$ if and only if $u$ is the parent of $v$.
\end{enumerate}
\end{obs}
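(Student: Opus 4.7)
The plan is to unpack the definitions of $\mathcal{L}_u$ and $\mathcal{L}_v$ and reduce each claim to an arithmetic comparison of heights (and, for (b), to the nesting of leaf-descendant sets). The first move is to observe that $u$ cannot be a leaf, since leaves have no descendants, so the internal-vertex formula applies to $u$: $l_y(u) = h - h^{r^\prime}(u) + 1$, $r_y(u) = h - h^{r^\prime}(u) + 2$, and $l_x(u), r_x(u)$ are the minimum and maximum of $l_x(w)$ over $w \in L^{r^\prime}(u)$. Ancestry gives $h^{r^\prime}(v) \geq h^{r^\prime}(u) + 1$ with equality iff $u$ is the parent of $v$, and $L^{r^\prime}(v) \subseteq L^{r^\prime}(u)$ (interpreting $L^{r^\prime}(v) = \{v\}$ when $v$ is a leaf).

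For (a) I would prove the stronger statement $l_y(v) < l_y(u)$. If $v$ is internal, the height inequality gives it directly. If $v$ is a leaf, one checks that across the three special leaf definitions ($b_0$, $b_{k-1}$ with $l_y = 0$; $b_1$ and the generic leaves with $l_y = 1$) one has $l_y(v) \leq 1$; combined with the fact that an internal $u$ has a leaf descendant and hence $h^{r^\prime}(u) \leq h - 1$, this yields $l_y(u) \geq 2 > l_y(v)$. For (b), if $v$ is a leaf then $v \in L^{r^\prime}(u)$ and the claim is immediate from the min/max definition of $l_x(u), r_x(u)$; if $v$ is internal, $L^{r^\prime}(v) \subseteq L^{r^\prime}(u)$ sandwiches the min and max defining its $x$-interval inside those of $u$, giving even $[l_x(v), r_x(v)] \subseteq [l_x(u), r_x(u)]$.

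For (c), I would use that $r_y(v) = h - h^{r^\prime}(v) + 2$ for every vertex $v$ (leaf or internal) to compute
\[
l_y(u) - r_y(v) = (h - h^{r^\prime}(u) + 1) - (h - h^{r^\prime}(v) + 2) = h^{r^\prime}(v) - h^{r^\prime}(u) - 1 \geq 0,
\]
with equality iff $u$ is the parent of $v$. Combined with $l_y(v) < l_y(u)$ from (a), this gives the chain $l_y(u) \in [l_y(v), r_y(v)] \iff l_y(u) \leq r_y(v) \iff u$ is the parent of $v$.

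The only potential snag is the special leaves $b_0$, $b_1$, $b_{k-1}$, whose $l_y$ and $x$-intervals deviate from the generic pattern, but the slack $l_y(u) \geq 2$ absorbs all leaf $l_y$ values uniformly, and parts (b) and (c) only rely on $r_y(v)$ and membership in $L^{r^\prime}(u)$, which behave identically across leaf types. So there is no real obstacle beyond careful bookkeeping.
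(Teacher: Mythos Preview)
Your proposal is correct and follows essentially the same approach as the paper's proof: both case-split on whether $v$ is a leaf or internal, use $h^{r^\prime}(u)\le h-1$ to get $l_y(u)\ge 2$ in the leaf case, use $L^{r^\prime}(v)\subseteq L^{r^\prime}(u)$ for (b), and reduce (c) to the height comparison $h^{r^\prime}(v)-h^{r^\prime}(u)-1\ge 0$ with equality exactly when $u$ is the parent. Your write-up is, if anything, slightly more explicit than the paper's about the arithmetic and the special leaves, but the argument is the same.
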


\begin{proof}
According to our construction, for any internal vertex $w$, $l_y(w) = h - h^{r^\prime}(w) + 1$. When $u$ is an ancestor of $v$ clearly $u$ is an internal vertex. Suppose first that $v$ is a leaf. Clearly, $l_y(v) =0\ \mbox{or}\ 1$. But $h^{r^\prime}(u)$ can be at most $h-1$ and therefore $l_y(u) \geq 2$. Hence $l_y(v) \notin [l_y(u),r_y(u)]$.  If $v$ is an internal vertex then $h^{r^\prime}(u) < h^{r^\prime}(v)$. Hence $l_y(v) < l_y(u)$, implies $l_y(v) \notin [l_y(u),r_y(u)]$. This completes the proof of (a). Now, we prove the Observation~\ref{obs:comparable}(b). Since $u$ is an ancestor of $v$, we have $L^{r^\prime}(v) \subseteq L^{r^\prime}(u)$. From our construction, $l_x(u)=\min\{l_x(w)~\vert~w\in L^{r^\prime}(u)\}\leq l_x(v)$ and $r_x(u)=\max\{l_x(w)~\vert~w\in L^{r^\prime}(u)\}\geq l_x(v)$. So we have $l_x(v)\in [l_x(u),r_x(u)]$. This proves (b). In order to prove (c), first assume $u$ to be the parent of $v$. Then $u$ is always an internal vertex. From our construction we have $l_y(u) = r_y(v)$. This implies $l_y(u) \in [l_y(v), r_y(v)]$. Conversely, let $l_y(u) \in [l_y(v), r_y(v)]$. From our construction we have $r_y(v) = h-h^{r^\prime}(v)+2$. Since $u$ is an ancestor of $v$, we have $h^{r^\prime}(u)\leq h^{r^\prime}(v)-1$. If $h^{r^\prime}(u)< h^{r^\prime}(v)-1$, then we have $r_y(v) < l_y(u)$. But this contradicts the fact that $l_y(u) \in [l_y(v), r_y(v)]$. We can thus conclude that $h^{r^\prime}(u)=h^{r^\prime}(v)-1$, implying that $u$ is the parent of $v$. This completes the proof of (c). 
\end{proof}

\begin{obs}
\label{obs:intleaf}
Let $u,v\in V(G)$ such that $u$ is an internal vertex and $v$ is a leaf. Then,
\begin{enumerate}[(a)]
\item $l_y(v)\notin [l_y(u),r_y(u)]$, and
\item $l_x(v)\in [l_x(u),r_x(u)]$ if and only if $u$ is an ancestor of $v$.
\end{enumerate}
\end{obs}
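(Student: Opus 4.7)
The plan is to handle the two parts separately, each by direct bookkeeping with the explicit $x$- and $y$-coordinates produced by the construction; the only nontrivial input needed will be Observation~\ref{obs: order}.

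For part (a), I would argue that the $y$-intervals of internal vertices sit strictly above those of leaves. From the construction every internal vertex $u$ satisfies $l_y(u) = h - h^{r^\prime}(u) + 1$, and since $u$ is internal it has some descendant leaf whose height is at least $h^{r^\prime}(u) + 1$, so $h^{r^\prime}(u) + 1 \le h$ and hence $l_y(u) \ge 2$. On the other side, inspecting the four formulas used to define the L-shapes for $b_0$, $b_1$, $b_{k-1}$ and a generic $b_i$ shows that $l_y(b_i)\in\{0,1\}$ for every leaf. Hence $l_y(v) < 2 \le l_y(u)$, which is actually strictly stronger than what (a) requires.

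For part (b) the forward direction is immediate: if $u$ is an ancestor of the leaf $v$ then $v \in L^{r^\prime}(u)$, so by the very definition of $l_x(u)$ and $r_x(u)$ we have $l_x(u)\le l_x(v)\le r_x(u)$. For the converse, which is the step where the real work lives, I would assume $l_x(v)\in[l_x(u),r_x(u)]$ and pick $b_i,b_j\in L^{r^\prime}(u)$ realising the minimum and maximum, so that $l_x(b_i)=l_x(u)$ and $l_x(b_j)=r_x(u)$. A short inspection shows that the values $l_x(b_0)=1.5$, $l_x(b_1)=1$ and $l_x(b_i)=i$ for $2\le i\le k-1$ are pairwise distinct, so if $l_x(v)$ equals $l_x(b_i)$ or $l_x(b_j)$ then $v\in\{b_i,b_j\}\subseteq L^{r^\prime}(u)$; otherwise $l_x(b_i)<l_x(v)<l_x(b_j)$, and Observation~\ref{obs: order} forces $v\in L^{r^\prime}(u)$. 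Either way $v$ is a descendant of $u$, so $u$ is an ancestor of $v$.

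The only point that needs any care is the distinctness of the leaf $l_x$-values (to justify that matching $l_x(v)$ to $l_x(b_i)$ or $l_x(b_j)$ actually identifies $v$); that is a routine numerical check. The real content is the appeal to Observation~\ref{obs: order} in the converse of (b), and I expect that single step to be the only substantive one in the whole proof.
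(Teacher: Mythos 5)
Your proof is correct and follows essentially the same route as the paper's: part (a) by noting every internal vertex has $l_y \geq 2$ while every leaf has $l_y \in \{0,1\}$, and part (b) by the min/max definition of $[l_x(u),r_x(u)]$ for the forward direction and an appeal to Observation~\ref{obs: order} together with the distinctness of leaf $l_x$-values for the converse (the paper phrases the converse as a contradiction and cites Observation~\ref{obs:comparable}(b) for the forward direction, but the content is identical).
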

\begin{proof}
According to our construction, $l_y(u) = h - h^{r^\prime}(u) + 1$. For any leaf $v$, $l_y(v) =0\ \mbox{or}\ 1$. Also we know, $h^{r^\prime}(u) \leq h-1$. Hence $l_y(v) < l_y(u)$. This proves (a). In order to prove (b), first assume that $u$ is an ancestor of $v$. From Observation~\ref{obs:comparable}(b), we have $l_x(v) \in [l_x(u), r_x(u)]$. Conversely, assume $l_x(v) \in [l_x(u), r_x(u)]$. Suppose that $u$ is not an ancestor of $v$. Then $v \notin L^{r^\prime}(u)$. (Note that this means that $u\neq r^{\prime}$.) From our construction, $l_x(u) = \min\{l_x(w)~\vert~w \in L^{r^\prime}(u)\}$ and $r_x(u) = \max\{l_x(w)~\vert~w \in L^{r^\prime}(u)\}$. Let $b_i\in L^{r^\prime}(u)$ be such that $l_x(u)=l_x(b_i)$ and let $b_j\in L^{r^\prime}(u)$ be such that $r_x(u)=l_x(b_j)$. So we have found three leaves $b_i, v$ and $b_j$ such that $l_x(b_i) \leq l_x(v) \leq l_x(b_j)$. Note that from our construction, for any two leaves $w,w'$, we have $l_x(w)=l_x(w')$ if and only if $w=w'$. Since $b_i, b_j \in L^{r^\prime}(u)$ and $v \notin L^{r^\prime}(u)$, we can conclude that the vertices $b_i,b_j$ are distinct from $v$. Therefore, we have $l_x(b_i) < l_x(v) < l_x(b_j)$. This contradicts Observation~\ref{obs: order}. Hence $u$ is an ancestor of $v$. This completes the proof.
\end{proof}

\begin{obs}
\label{obs:bothint}
Let $u$ and $v$ be both internal vertices. If $u$ is not an ancestor of $v$, or $v$ is not an ancestor of $u$, then $[l_x(v),r_x(v)]\cap [l_x(u),r_x(u)]=\emptyset$.
\end{obs}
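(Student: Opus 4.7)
The plan is to reduce the statement to a combinatorial fact about disjoint consecutive blocks in the linear ordering $b_0,b_1,\dots,b_{k-1}$ and then unwind the coordinate definitions. First, since $r'$ is an ancestor of every vertex, the hypothesis rules out $u=r'$ and $v=r'$. Because neither of $u,v$ is an ancestor of the other, Lemma~\ref{lem: ancdec} gives $L^{r'}(u)\cap L^{r'}(v)=\emptyset$, and by Observation~\ref{obs:lin} each of $L^{r'}(u)$ and $L^{r'}(v)$ is a consecutive block in the linear ordering $b_0,b_1,\dots,b_{k-1}$.

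Next I would determine the possible shapes of $L^{r'}(u)$ (and symmetrically $L^{r'}(v)$). Observation~\ref{obs:oneleaf} forces: if $b_0\in L^{r'}(u)$, then since $u\neq r'$ is an ancestor of $b_0$, we must have $L^{r'}(u)=\{b_0\}$; analogously $b_1\in L^{r'}(u)$ forces $L^{r'}(u)=\{b_1\}$. Thus either $L^{r'}(u)\in\{\{b_0\},\{b_1\}\}$, or $L^{r'}(u)$ is a consecutive block $\{b_i,b_{i+1},\dots,b_j\}$ with $2\le i\le j\le k-1$. Plugging into the construction gives three possible values for $[l_x(u),r_x(u)]$: the single point $\{1.5\}$, the single point $\{1\}$, or the interval $[i,j]$ (since $l_x(b_s)=s$ for every $s\in\{2,\dots,k-1\}$, including $s=k-1$).

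The conclusion then follows by a short case check using $L^{r'}(u)\cap L^{r'}(v)=\emptyset$. The singleton cases $\{b_0\},\{b_1\}$ for $u$ and $v$ yield disjoint one-point intervals $\{1.5\}$ and $\{1\}$, and against any block of type $[i,j]\subseteq[2,k-1]$ they are also disjoint. When both $L^{r'}(u)$ and $L^{r'}(v)$ are blocks in $\{b_2,\dots,b_{k-1}\}$, the fact that they are disjoint consecutive blocks in the linear order implies, without loss of generality, that every index in $L^{r'}(u)$ is strictly smaller than every index in $L^{r'}(v)$, so $r_x(u)<l_x(v)$ and the intervals are disjoint.

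The only subtle point, and what I expect to be the main obstacle, is that the $l_x$-ordering of the leaves does not coincide with the linear ordering $b_0,b_1,b_2,\dots,b_{k-1}$: indeed $l_x(b_1)=1<l_x(b_0)=1.5<2=l_x(b_2)$, so the ``leftmost'' leaf in $x$-coordinate is $b_1$, not $b_0$. This mismatch could in principle cause two disjoint consecutive blocks to produce overlapping $x$-intervals (for instance, $\{b_0\}$ versus $\{b_1,b_2,\dots\}$ would give $\{1.5\}\subset[1,j]$). The reason it does not arise is precisely Observation~\ref{obs:oneleaf}: whenever $b_0$ or $b_1$ appears in $L^{r'}(u)$ for $u\neq r'$, the set is forced to be a singleton, so such problematic ``mixed'' blocks never occur.
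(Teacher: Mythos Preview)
Your proof is correct, but it takes a different route from the paper's. The paper argues by contradiction: assuming $[l_x(u),r_x(u)]\cap[l_x(v),r_x(v)]\neq\emptyset$, it picks leaves $b_i,b_j,b_k$ realising $l_x(v),l_x(u),r_x(v)$, notes that $b_j\notin L^{r'}(v)$ while $b_i,b_k\in L^{r'}(v)$, and then invokes Observation~\ref{obs: order} (the ``sandwich'' observation on the $l_x$-ordering of leaves) to derive a contradiction. In other words, the paper has already packaged the delicate handling of the anomalous leaves $b_0,b_1$ into Observation~\ref{obs: order}, and here it just cites that.

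You instead bypass Observation~\ref{obs: order} entirely and argue directly from Observations~\ref{obs:oneleaf} and~\ref{obs:lin}: you classify $L^{r'}(u)$ (and $L^{r'}(v)$) into the three shapes $\{b_0\}$, $\{b_1\}$, or a block in $\{b_2,\dots,b_{k-1}\}$, compute the corresponding $x$-intervals explicitly, and do a short case check. This is a perfectly valid and self-contained argument; in effect you are inlining the content of Observation~\ref{obs: order} for this particular use. The paper's approach is a bit more modular (the sandwich lemma is reused elsewhere), while yours is more concrete and makes the role of the anomalous coordinates $l_x(b_0)=1.5$ and $l_x(b_1)=1$ fully explicit---which, as you correctly flag, is the only subtle point.
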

\begin{proof}
Let $u$ and $v$ be internal vertices such that neither of them is an ancestor of the other. Hence from the Lemma~\ref{lem: ancdec}, we have $L^{r^\prime}(u) \cap L^{r^\prime}(v) = \emptyset$. For the sake of contradiction, assume $[l_x(v),r_x(v)]\cap [l_x(u),r_x(u)]\neq\emptyset$. Then either $l_x(u) \in [l_x(v),r_x(v)]$ or $l_x(v) \in [l_x(u),r_x(u)]$. Let us consider the case when $l_x(u) \in [l_x(v),r_x(v)]$. From our construction, $l_x(u) = \min\{l_x(w)~\vert~w \in L^{r^\prime}(u)\}$, $l_x(v)=\min\{l_x(w)~\vert~w \in L^{r^\prime}(v)\}$ and $r_x(v) = \max\{l_x(w)~\vert~w \in L^{r^\prime}(v)\}$. Let $b_i,b_j$ and $b_k$ be leaves such that $l_x(b_i) = l_x(v)$, $l_x(b_j) = l_x(u)$ and $l_x(b_k) = r_x(v)$. Clearly, $b_i, b_k \in L^{r^\prime}(v)$ and $b_j \in L^{r^\prime}(u)$. Since $L^{r^\prime}(u)\cap L^{r^\prime}(v)=\emptyset$, we know that $b_j\notin L^{r^\prime}(v)$. This implies that $b_j$ is distinct from both $b_i$ and $b_k$. Since $l_x(v)\leq l_x(u)\leq r_x(v)$, we have $l_x(b_i) \leq l_x(b_j) \leq l_x(b_k)$. From our construction, it is clear that for any two leaves $w,w'$, we have $l_x(w)=l_x(w')$ if and only if $w=w'$. Therefore, since $b_j$ is distinct from $b_i,b_k$, we have $l_x(b_i) < l_x(b_j) < l_x(b_k)$. Since we also have $b_i, b_k \in L^{r^\prime}(v)$ and $b_j \notin L^{r^\prime}(v)$, we have a contradiction to Observation~\ref{obs: order}. The proof for the case when $l_x(v) \in [l_x(u),r_x(u)]$ is similar.
\end{proof}

\begin{obs}
\label{obs:bothleaf}
Let $u$ and $v$ be both leaves. Then $u$ and $v$ are consecutive leaves if and only if either: 
\begin{enumerate}[(a)]
\item $l_x(u) \in [l_x(v), r_x(v)]$ and $l_y(v) \in [l_y(u), r_y(u)]$, or
\item $l_x(v) \in [l_x(u), r_x(u)]$ and $l_y(u) \in [l_y(v), r_y(v)]$.
\end{enumerate}
\end{obs}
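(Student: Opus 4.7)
The plan is to verify the observation by a direct case analysis driven by the explicit construction. First I would unpack the construction for the leaves: every leaf other than $b_0$ has $l_x(b_i)=i$, and $r_x(b_i)=i+1$ except for $b_0$ (where $l_x(b_0)=1.5$, $r_x(b_0)=k-1$). The bottom coordinates split into two families: $l_y(b_0)=l_y(b_{k-1})=0$, whereas $l_y(b_i)=1$ for all other leaves, and every $r_y(b_i)\geq 2$. These few inequalities drive everything.

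For the ``only if'' direction I would enumerate the consecutive pairs in the cyclic order $b_0,b_1,\dots,b_{k-1},b_0$ and verify condition (a) or (b) for each.  The middle pairs $(b_i,b_{i+1})$ with $1\leq i\leq k-3$ satisfy $l_x(b_{i+1})=i+1=r_x(b_i)\in[l_x(b_i),r_x(b_i)]$ and $l_y(b_i)=1=l_y(b_{i+1})\in[l_y(b_{i+1}),r_y(b_{i+1})]$, so (b) holds. The pair $(b_0,b_1)$ satisfies $l_x(b_0)=1.5\in[1,2]=[l_x(b_1),r_x(b_1)]$ and $l_y(b_1)=1\in[0,r_y(b_0)]$, so (b) holds with $u=b_0,v=b_1$. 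The pair $(b_{k-2},b_{k-1})$ satisfies $l_x(b_{k-1})=k-1=r_x(b_{k-2})\in[l_x(b_{k-2}),r_x(b_{k-2})]$ and $l_y(b_{k-2})=1\in[0,r_y(b_{k-1})]$. Finally, $(b_{k-1},b_0)$ satisfies $l_x(b_{k-1})=k-1=r_x(b_0)\in[l_x(b_0),r_x(b_0)]$ and $l_y(b_0)=0\in[0,r_y(b_{k-1})]$, so (b) holds with $u=b_0,v=b_{k-1}$.

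For the ``if'' direction I would show the contrapositive: if $b_i,b_j$ are non-consecutive leaves then neither (a) nor (b) holds. Split on how many of $b_i,b_j$ lie in $\{b_0,b_{k-1}\}$. If neither does, then $l_x(b_i)=i$, $r_x(b_i)=i+1$ and similarly for $b_j$; non-consecutiveness $|i-j|\geq 2$ makes the intervals $[l_x(b_i),r_x(b_i)]$ and $[l_x(b_j),r_x(b_j)]$ have disjoint interiors meeting in no endpoint of the form $l_x(\cdot)$, so the $x$-condition in both (a) and (b) fails. If exactly one of them is $b_0$, say $b_i=b_0$, then $l_x(b_0)=1.5\in[j,j+1]$ only when $j=1$ (ruling out (a) for $j\geq 3$), while for (b) the required $l_y(b_0)=0\in[l_y(b_j),r_y(b_j)]=[1,r_y(b_j)]$ fails regardless; hence non-consecutiveness ($j\ne 1,k-1$) closes the case. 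If $b_i=b_{k-1}$ and $b_j\notin\{b_0,b_{k-1}\}$, non-consecutiveness forces $j\leq k-3$, which makes $j\notin[k-1,k]$ and $k-1\notin[j,j+1]$, so the $x$-condition fails for both (a) and (b). The remaining case $\{b_i,b_j\}=\{b_0,b_{k-1}\}$ is already consecutive, so it does not arise.

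The only delicate step is bookkeeping the special shapes of $b_0,b_1,b_{k-1}$; once the explicit $l_x,r_x,l_y,r_y$ values are written down, the rest is comparison of intervals. I expect the main obstacle to be the pair involving $b_0$ with a middle leaf $b_j$: here the interval $[l_x(b_0),r_x(b_0)]=[1.5,k-1]$ is wide, so the $x$-condition of (b) is satisfied for many $j$, and one must explicitly invoke $l_y(b_0)=0<1=l_y(b_j)$ to rule out (b), together with $l_x(b_0)=1.5\notin[j,j+1]$ for $j\geq 2$ to rule out (a).
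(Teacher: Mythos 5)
Your proof is correct and takes essentially the same route as the paper's: a direct case analysis of the explicit coordinates, verifying (a) or (b) for each consecutive pair (with $(b_0,b_1)$, $(b_{k-2},b_{k-1})$ and $(b_{k-1},b_0)$ treated separately) and showing the relevant $x$- or $y$-condition fails for every non-consecutive pair. Two cosmetic slips only: for the pair $(b_0,b_1)$ the conditions you actually verify are those of (a), not (b), and in the $b_0$-versus-middle-leaf case the parenthetical should say $j\geq 2$ (equivalently $j\neq 1$) rather than $j\geq 3$; neither affects the argument.
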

\begin{proof}
Suppose that $u=b_i$ and $v=b_j$ are consecutive leaves. Without loss of generality we assume that $i<j$. Clearly, $i<k-1$. First, consider the case when $i>0$. Then, we have $j>1$. From the construction, it is clear that $l_x(v)=r_x(u)$ and $l_y(u) \in [l_y(v), r_y(v)]$. Hence it satisfies (b). If $i=0$ (that is $u=b_0$), either $v=b_1$ or $v=b_{k-1}$. If $u = b_0$ and $v = b_1$, then $l_x(u) \in  [l_x(v), r_x(v)]$ and $l_y(v) \in [l_y(u), r_y(u)]$. Hence it satisfies (a). If $u = b_0$ and $v=b_{k-1}$, then $l_x(v) = r_x(u)$ and $l_y(u) = l_y(v)$. In this case, (b) is satisfied. So at least one of (a) or (b) is true when $u$ and $v$ are consecutive leaves. Conversely, let us assume $u = b_i$ and $v = b_j$ are not consecutive leaves. Again, we assume without loss of generality that $i<j$. First consider the case when $i>0$. From the construction, it is clear that if $j >i+1$, then $[l_x(u), r_x(u)] \cap [l_x(v), r_x(v)] = \emptyset$. This implies that neither of (a) or (b) is satisfied. Now consider the case when $u = b_0$ and $v \neq b_1~\mbox{or}~k-1$. In this case, $l_y(u) \notin [l_y(v), r_y(v)]$ and $l_x(u) \notin [l_x(v), r_x(v)]$. Hence neither of (a) or (b) is satisfied. This concludes the proof.
\end{proof}

Now we are ready to show the main result of this section.

\begin{clm}
\label{clm:halin1vpg}
The collection of L-shapes $\{\mathcal{L}_u\}_{u\in V(G)}$ forms a valid one bend VPG representation of $G$.
\end{clm}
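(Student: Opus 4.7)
The plan is to show by case analysis on whether each of $u,v\in V(G)$ is internal in $T$ or a leaf that $\mathcal{L}_u\cap\mathcal{L}_v\neq\emptyset$ if and only if $uv\in E(G)$. Recall that the edges of $G=T\cup C$ split into two kinds: tree edges (parent--child in $T$) and cycle edges (consecutive leaves on $C$). In each subcase I will match the geometric intersection condition---obtained by examining which of the two line segments of $\mathcal{L}_u$ (its vertical arm at $x=l_x(u)$, or its horizontal arm at $y=l_y(u)$) can meet which segment of $\mathcal{L}_v$---against the combinatorial edge condition, using the already-established Observations~\ref{obs:comparable}, \ref{obs:intleaf}, \ref{obs:bothint}, and~\ref{obs:bothleaf}.

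For two internal vertices $u$ and $v$, I first split on whether one is an ancestor of the other (Lemma~\ref{lem: ancdec}). When neither is, there is no edge between them, and Observation~\ref{obs:bothint} yields disjoint $x$-intervals, so the two L-shapes cannot meet. When $u$ is an ancestor of $v$, the only candidate edge is the tree edge, present precisely when $u$ is the parent of $v$. Observation~\ref{obs:comparable}(a) precludes any incidence involving the horizontal arm of $\mathcal{L}_v$; parts (b) and (c) together say the vertical arm of $\mathcal{L}_v$ meets the horizontal arm of $\mathcal{L}_u$ exactly when $u$ is the parent of $v$; and because the two horizontals lie at distinct heights $l_y(u)\neq l_y(v)$, no other segment--segment incidence can occur.

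For an internal vertex $u$ and a leaf $v$, the only candidate edge is again the tree edge $u=\mathrm{parent}(v)$. Observation~\ref{obs:intleaf}(a) rules out the horizontal arm of $\mathcal{L}_v$ meeting $\mathcal{L}_u$, and part (b) identifies the condition $l_x(v)\in[l_x(u),r_x(u)]$ with $u$ being an ancestor of $v$. A short height comparison using the explicit formulas $l_y(u)=h-h^{r^\prime}(u)+1$ and $r_y(v)=h-h^{r^\prime}(v)+2$ then shows that the top of the vertical arm of $\mathcal{L}_v$ reaches the horizontal arm of $\mathcal{L}_u$ iff $h^{r^\prime}(v)=h^{r^\prime}(u)+1$, i.e.\ iff $u$ is the parent of $v$. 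Finally, for two leaves $u$ and $v$, the candidate edges are the cycle edges between consecutive leaves, and Observation~\ref{obs:bothleaf} is tailor-made for this: its alternatives (a) and (b) are exactly the two ways in which a vertical arm of one L-shape can meet a horizontal arm of the other, and both amount to $u$ and $v$ being consecutive on $C$.

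The main obstacle is the bookkeeping in the two "ancestor" subcases, where I must carefully exclude spurious intersections---for instance, a vertical--vertical incidence arising when two vertices happen to share the leftmost leaf-descendant (so $l_x(u)=l_x(v)$), which the observations control only indirectly---and must handle uniformly the slightly non-standard definitions of $\mathcal{L}_{b_0}$, $\mathcal{L}_{b_1}$, and $\mathcal{L}_{b_{k-1}}$. Once the four cases are laid out with the appropriate observation cited, the remainder is a routine verification.
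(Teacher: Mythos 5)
Your proposal is correct and follows essentially the same route as the paper: the paper first packages "$\mathcal{L}_u\cap\mathcal{L}_v\neq\emptyset$" into two explicit corner-in-arm conditions and then runs the same case analysis (internal/internal split by the ancestor relation via Lemma~\ref{lem: ancdec}, internal/leaf, leaf/leaf), citing exactly Observations~\ref{obs:comparable}, \ref{obs:intleaf}, \ref{obs:bothint} and~\ref{obs:bothleaf}, with your "height comparison" for the parent case being precisely Observation~\ref{obs:comparable}(c). The spurious vertical--vertical incidences you worry about are automatically excluded in the paper's formulation, since any such overlap already forces one of its two conditions, which are shown to fail for non-adjacent pairs.
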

\begin{proof}
We prove that for distinct $u, v \in V(G)$, $uv \in E(G)$ if and only if $\mathcal{L}_u \cap \mathcal{L}_v \neq \emptyset$.
It is easy to see that $\mathcal{L}_u\cap\mathcal{L}_v\neq\emptyset$ if and only if $\mathcal{L}_u,\mathcal{L}_v$ satisfy at least one of the following two conditions:
\begin{enumerate}
\renewcommand{\theenumi}{(\arabic{enumi})}
\renewcommand{\labelenumi}{(\arabic{enumi})}
\item\label{cons:uv} $l_x(u) \in [l_x(v), r_x(v)]$ and $l_y(v) \in [l_y(u), r_y(u)]$, or
\item\label{cons:vu} $l_x(v) \in [l_x(u), r_x(u)]$ and $l_y(u) \in [l_y(v), r_y(v)]$.
\end{enumerate}

Therefore, we only need to show that for distinct $u,v\in V(G)$, $uv\in E(G)$ if and only if $\mathcal{L}_u,\mathcal{L}_v$ satisfy either condition~\ref{cons:uv} or condition~\ref{cons:vu}. Note that we will be done if we show that $\mathcal{L}_u,\mathcal{L}_v$ satisfy either condition~\ref{cons:uv} or condition~\ref{cons:vu} if and only if $u$ is a parent of $v$, $v$ is a parent of $u$, or $u, v$ are consecutive leaves. 

Let $u$ be the parent of $v$. From Observation~\ref{obs:comparable}(c), we have $l_y(u) \in [l_y(v), r_y(v)]$. Also, from Observation~\ref{obs:comparable}(b), we have $l_x(v) \in [l_x(u), r_x(u)]$. Therefore, $\mathcal{L}_u,\mathcal{L}_v$ satisfy condition~\ref{cons:vu}. Similarly, it can be shown that when $v$ is the parent of $u$, $\mathcal{L}_u,\mathcal{L}_v$ satisfy the condition~\ref{cons:uv}. When $u, v$ are consecutive leaves, from Observation~\ref{obs:bothleaf}, we have that $\mathcal{L}_u,\mathcal{L}_v$ satisfy either condition~\ref{cons:uv} or condition~\ref{cons:vu}.

Conversely, we show that if $u$ is not a parent of $v$, $v$ is not a parent of $u$ and $u, v$ are not two consecutive leaves, then $\mathcal{L}_u,\mathcal{L}_v$ do not satisfy either condition~\ref{cons:uv} or condition~\ref{cons:vu}. Suppose first that neither of $u, v$ is an ancestor of the other. If $u$ is an internal vertex and $v$ is a leaf, then by Observation~\ref{obs:intleaf}(a), we have $l_y(v)\notin [l_y(u),l_y(u)]$, implying that $\mathcal{L}_u,\mathcal{L}_v$ do not satisfy condition~\ref{cons:uv}. By Observation~\ref{obs:intleaf}(b), we have $l_x(v)\notin [l_x(u),r_x(u)]$, implying that $\mathcal{L}_u,\mathcal{L}_v$ do not satisfy condition~\ref{cons:vu} either. Similarly, it can be shown that when $v$ is an internal vertex and $u$ is a leaf, $\mathcal{L}_u,\mathcal{L}_v$ do not satisfy either condition~\ref{cons:uv} or condition~\ref{cons:vu}.
Let $u$ and $v$ both be internal vertices. Then, by Observation~\ref{obs:bothint}, we have $l_x(u) \notin [l_x(v), r_x(v)]$ and $l_x(v) \notin [l_x(u), r_x(u)]$. Therefore, $\mathcal{L}_u,\mathcal{L}_v$ do not satisfy either condition~\ref{cons:uv} or condition~\ref{cons:vu}. Let us now look at the case when $u$ and $v$ are both leaves. Then, Observation~\ref{obs:bothleaf} implies that $\mathcal{L}_u,\mathcal{L}_v$ do not satisfy either condition~\ref{cons:uv} or condition~\ref{cons:vu}.

Suppose that $u$ is an ancestor of $v$. Then, by Observation~\ref{obs:comparable}(a), we know that $\mathcal{L}_u,\mathcal{L}_v$ do not satisfy condition~\ref{cons:uv}. Also, since $u$ is not the parent of $v$, we know by Observation~\ref{obs:comparable}(c) that $\mathcal{L}_u,\mathcal{L}_v$ do not satisfy condition~\ref{cons:vu} either. If $v$ is an ancestor of $u$, then the same line of reasoning shows that $\mathcal{L}_u,\mathcal{L}_v$ do not satisfy either condition~\ref{cons:uv} or condition~\ref{cons:vu}.

This completes the proof of Claim~\ref{clm:halin1vpg}.
\end{proof}

We now show that there are Halin graphs that have VPG bend-number more than 0.

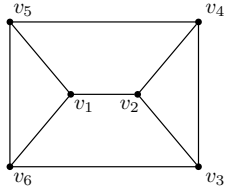
\begin{figure}[h]
\centering
\scalebox{0.8}
{
\begin{pspicture}(0,-1.558125)(4.0028124,1.558125)
\psline[linewidth=0.02cm,dotsize=0.07055555cm 2.0]{*-}(2.1809375,-0.0403125)(1.0809375,-0.0403125)
\psline[linewidth=0.02cm,dotsize=0.07055555cm 2.0]{*-}(1.0809375,-0.0403125)(0.0809375,-1.2403125)
\psline[linewidth=0.02cm,dotsize=0.07055555cm 2.0]{*-}(0.0809375,1.1596875)(1.0809375,-0.0403125)
\psline[linewidth=0.02cm,dotsize=0.07055555cm 2.0]{*-}(3.1809375,-1.2403125)(2.1809375,-0.0403125)
\psline[linewidth=0.02cm,dotsize=0.07055555cm 2.0]{*-}(3.1809375,1.1596875)(2.1809375,-0.0403125)
\psline[linewidth=0.02cm,dotsize=0.07055555cm 2.0]{*-}(0.0809375,-1.2403125)(3.1809375,-1.2403125)
\psline[linewidth=0.02cm](3.1809375,1.1596875)(3.1809375,-1.2403125)
\psline[linewidth=0.02cm](3.1809375,1.1596875)(0.0809375,1.1596875)
\psline[linewidth=0.02cm](0.0809375,1.1596875)(0.0809375,-1.2403125)
\rput(1.3023437,-0.2353125){$v_1$}
\rput(2.0523438,-0.2353125){$v_2$}
\rput(3.4623436,-1.4353126){$v_3$}
\rput(3.4623436,1.3546875){$v_4$}
\rput(0.30234374,1.3546875){$v_5$}
\rput(0.30234374,-1.4353126){$v_6$}
\end{pspicture} 
}
\caption{A Halin graph which has no $B_0$-VPG representation.}
\label{fig: fig01}
\end{figure}

\begin{clm}\label{clm:halinnotb0vpg}
There exists a Halin graph that is not $B_0$-VPG.
\end{clm}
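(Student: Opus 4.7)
The plan is to show that the specific 6-vertex Halin graph $H$ depicted in Figure~\ref{fig: fig01} is not $B_0$-VPG. In $H$, the four leaves $v_3,v_4,v_5,v_6$ induce a $4$-cycle in the cyclic order $v_3v_4v_5v_6$, while the internal vertices are $v_1$ (with neighbours $v_2,v_5,v_6$) and $v_2$ (with neighbours $v_1,v_3,v_4$); in particular $v_1v_2\in E(H)$ while $v_1$ is non-adjacent to $v_3,v_4$ and $v_2$ is non-adjacent to $v_5,v_6$. I would suppose for contradiction that a $B_0$-VPG representation $\{\mathcal{L}_u\}_{u\in V(H)}$ exists, with each $\mathcal{L}_u$ a single horizontal or vertical line segment.

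The first step is to classify the possible $B_0$-VPG representations of the induced $C_4$ on $\{v_3,v_4,v_5,v_6\}$. Using the fact that two parallel axis-aligned segments intersect only when they are collinear, a short case analysis on the orientations of the four segments rules out every configuration except, up to interchanging horizontal with vertical, the one in which the non-adjacent pair $\{v_3,v_5\}$ is horizontal and $\{v_4,v_6\}$ is vertical, with distinct supporting lines $y=Y_3\neq y=Y_5$ and $x=X_4\neq x=X_6$. The four segments then trace out the sides of the axis-aligned rectangle whose corners are $(X_4,Y_3)$, $(X_6,Y_3)$, $(X_6,Y_5)$, $(X_4,Y_5)$. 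The degenerate subcases in which two opposite segments share a coordinate line are excluded because the two perpendicular segments would then have to meet two disjoint subintervals at once, which a single straight segment cannot do.

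Next I would translate the adjacency constraints on $v_1$ and $v_2$ into restrictions on $\mathcal{L}_{v_1}$ and $\mathcal{L}_{v_2}$. Because $v_1$ must meet $v_5$ and $v_6$ but avoid $v_3$ and $v_4$, a direct check shows that $\mathcal{L}_{v_1}$ is either the horizontal segment on $y=Y_5$ whose $x$-range contains $X_6$ but not $X_4$, or the vertical segment on $x=X_6$ whose $y$-range contains $Y_5$ but not $Y_3$. By symmetric reasoning, $\mathcal{L}_{v_2}$ is either horizontal on $y=Y_3$ (containing $X_4$, missing $X_6$) or vertical on $x=X_4$ (containing $Y_3$, missing $Y_5$).

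Finally I would check the four combinations of orientations for $(\mathcal{L}_{v_1},\mathcal{L}_{v_2})$. If both are horizontal they sit on the distinct parallel lines $y=Y_5$ and $y=Y_3$ and are therefore disjoint; likewise if both are vertical. In each of the two mixed cases the only candidate intersection point is $(X_4,Y_5)$ or $(X_6,Y_3)$, and this point is excluded from $\mathcal{L}_{v_1}$ because $X_4$ lies outside its $x$-range in the first case and $Y_3$ lies outside its $y$-range in the second. Hence $\mathcal{L}_{v_1}\cap\mathcal{L}_{v_2}=\emptyset$ in all four cases, contradicting $v_1v_2\in E(H)$. The main obstacle is the first step, in which one must force the rectangle structure on the $C_4$ and eliminate every collinear degeneracy; once this is done, the analysis of $v_1$ and $v_2$ reduces to a quick four-way case check.
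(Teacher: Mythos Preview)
Your argument is correct: the rectangle classification for the induced $C_4$ goes through (the collinear degeneracies are indeed killed because a single vertical segment can meet the common horizontal line in only one point, which cannot lie in two disjoint $x$-intervals), and the subsequent four-way case check on $\mathcal{L}_{v_1},\mathcal{L}_{v_2}$ is clean and exhaustive.

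However, your route differs from the paper's. The paper does not analyse the leaf $C_4$ at all; instead it starts from the triangle $\{v_1,v_5,v_6\}$, pigeonholes two of these three segments into the same orientation (say $\mathcal{S}_{v_5},\mathcal{S}_{v_6}$ both horizontal, collinear on $y=l_y(v_5)$ with $x$-overlap $[a,b]$), and then uses only the private neighbours $v_4$ of $v_5$ and $v_3$ of $v_6$ to force $r_x(v_4)<a\le b<l_x(v_3)$, contradicting the edge $v_3v_4$. Notably the paper never touches $v_2$. Your approach trades that short symmetry argument for a complete structural description of $B_0$-VPG representations of $C_4$; this costs a longer case analysis up front but makes the endgame on $v_1,v_2$ essentially mechanical, and has the side benefit of recording a reusable fact about $C_4$.
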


\begin{proof}
Consider the Halin graph with six vertices and nine edges, shown in Figure~\ref{fig: fig01}. For the sake of contradiction, assume that this graph is in $B_0$-VPG. Then it has a VPG representation in which each vertex $v$ is represented by a horizontal or vertical line segment, which we denote by $\mathcal{S}_v$. If $(x_1,y_1)$ and $(x_2,y_2)$ are the two endpoints of a segment $\mathcal{S}_v$ in this representation, we define $l_x(v)=\min\{x_1,x_2\}$, $r_x(v)=\max\{x_1,x_2\}$, $l_y(v)=\min\{y_1,y_2\}$ and $r_y(v)=\{y_1,y_2\}$. Clearly, if $\mathcal{S}_v$ is a horizontal segment, we have $l_y(v)=r_y(v)$ and if it is a vertical segment, we have $l_x(v)=r_x(v)$.

Notice that $v_1, v_5, v_6$ induce a $K_3$ in the graph. Clearly, some two among $\mathcal{S}_{v_5}$, $\mathcal{S}_{v_6}$ and $\mathcal{S}_{v_1}$ have the same orientation, i.e., either horizontal or vertical. Because the vertices are all symmetric to each other, we can assume without loss of generality that $\mathcal{S}_{v_5}$ and $\mathcal{S}_{v_6}$ are both horizontal. (We can argue in a similar fashion if they are both vertical segments.) As they intersect, we have $l_y(v_5) = l_y(v_6)$ and $[l_x(v_5), r_x(v_5)] \cap [l_x(v_6), r_x(v_6)] \neq \emptyset$. Let $[l_x(v_5), r_x(v_5)] \cap [l_x(v_6), r_x(v_6)] = [a, b]$. Again by symmetry between $v_5$ and $v_6$, we can assume without loss of generality that $l_x(v_5)\leq l_x(v_6)$. 
Note that $v_5$ has one neighbour $v_4$, which has no adjacency with $v_6$ and $v_6$ has one neighbour $v_3$, which has no adjacency with $v_5$. This implies that $l_x(v_6) = a$ and $r_x(v_5) = b$ and also that there exists a point $(e,l_y(v_5))\in\mathcal{S}_{v_4}$ with $l_x(v_5)\leq e<a$. If $\mathcal{S}_{v_4}$ is a vertical line segment, then this means that $r_x(v_4)=l_x(v_4)=e<a$. Suppose that $\mathcal{S}_{v_4}$ is a horizontal line segment. Then $l_y(v_4)=r_y(v_4)=l_y(v_5)$. Since $\mathcal{S}_{v_4}$ contains the point $(e,l_y(v_5))$, it cannot be the case that $r_x(v_4)\geq a$ as that would imply that the point $(a,l_y(v_5))\in\mathcal{S}_{v_4}$ which would be a contradiction to the fact that $\mathcal{S}_{v_4}\cap\mathcal{S}_{v_6}=\emptyset$. Therefore, we can conclude that $r_x(v_4)<a$. Therefore, irrespective of whether $\mathcal{S}_{v_4}$ is horizontal or vertical, we have $r_x(v_4)<a$. Arguing very similarly, we can also conclude that $l_x(v_3)>b$. So, we have $r_x(v_4)<a\leq b<l_x(v_3)$ and therefore, $\mathcal{S}_{v_4}\cap\mathcal{S}_{v_3}=\emptyset$. But this contradicts the fact that $v_4$ and $v_3$ are adjacent in the graph.
\end{proof}

\begin{thm}
Tree-union-cycle graphs are in $B_1$-VPG but there is even a Halin graph that is not in $B_0$-VPG.
\end{thm}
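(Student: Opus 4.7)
The plan is to assemble the theorem directly from the two claims just established in this section. For the first statement---that every tree-union-cycle graph lies in $B_1$-VPG---I would invoke the explicit L-shape construction preceding Claim~\ref{clm:halin1vpg}, which together with that claim produces a valid $B_1$-VPG representation for any such graph $G$. Since Halin graphs are by definition a subclass of tree-union-cycle graphs, this upper bound in particular applies to them. The construction was carried out under the standing assumption $|S|\geq 2$, so the one remaining case is the wheel graph, which I would handle with a short direct construction: assign the hub a single horizontal segment, and realize each spoke $c_i$ as an L-shape whose vertical leg meets the hub and whose horizontal leg overlaps only with those of its two cyclic neighbours, which is a routine orientation check.

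For the second statement---that there is a Halin graph that is not in $B_0$-VPG---I would simply cite Claim~\ref{clm:halinnotb0vpg}, which already exhibits the six-vertex graph of Figure~\ref{fig: fig01} as an explicit witness, together with the case analysis on segment orientations around the triangle $v_1v_5v_6$ that rules out any $B_0$-VPG representation.

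The order is therefore: (i) invoke the upper bound from Claim~\ref{clm:halin1vpg} for non-wheel tree-union-cycle graphs; (ii) dispose of the wheel case by the one-paragraph construction above; (iii) invoke Claim~\ref{clm:halinnotb0vpg} for the matching lower bound on the Halin subclass. There is essentially no substantive obstacle, since both halves have been fully proved. The only (mild) bookkeeping point is the wheel case in the upper bound: the Section~3 construction does not literally cover wheels because the rerooting at $hca(c_i,c_{i+1})$ coming from Lemma~\ref{lem: path} degenerates when the tree has a single internal vertex. No further machinery beyond what has been developed is needed.
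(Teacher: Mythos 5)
Your proposal is correct and follows essentially the same route as the paper, which likewise derives the theorem directly from Claim~\ref{clm:halin1vpg} (the L-shape construction for tree-union-cycle graphs) and Claim~\ref{clm:halinnotb0vpg} (the six-vertex Halin graph with no $B_0$-VPG representation). Your extra paragraph covering the wheel case is a reasonable bit of added care, since the paper's construction is stated under the assumption that $S$ is not a singleton and the paper itself does not spell that case out.
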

\begin{proof}
The proof follows directly from Claims~\ref{clm:halin1vpg} and~\ref{clm:halinnotb0vpg}.
\end{proof}

\begin{cor}
If $H$ is any Halin graph, then $b_v(H)\leq 1$. This bound is tight.
\end{cor}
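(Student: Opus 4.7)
The plan is to deduce both parts of the corollary directly from the preceding theorem, which packages exactly what is needed. First I would recall that by definition a Halin graph is a special kind of tree-union-cycle graph (namely one without degree-$2$ vertices), so every Halin graph $H$ falls under the hypothesis of the theorem. The theorem asserts that tree-union-cycle graphs lie in $B_1$-VPG, hence $H$ admits a $B_1$-VPG representation, i.e. $b_v(H)\le 1$.

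For the tightness claim, the strategy is simply to exhibit a single Halin graph whose VPG bend-number is exactly $1$. The theorem already supplies such an example by referring to Claim~\ref{clm:halinnotb0vpg}: the six-vertex Halin graph in Figure~\ref{fig: fig01} is not in $B_0$-VPG, so it cannot have VPG bend-number $0$. Combined with the upper bound $b_v\le 1$, this forces $b_v$ of that graph to equal $1$, witnessing tightness of the inequality $b_v(H)\le 1$ in the class of Halin graphs.

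No real obstacle is expected here — the corollary is essentially a restatement of the theorem using the $b_v$ notation — so the write-up would be a one- or two-sentence proof that cites the theorem for the upper bound and cites Claim~\ref{clm:halinnotb0vpg} for tightness.
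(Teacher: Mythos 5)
Your proposal is correct and is exactly the intended argument: the upper bound follows because every Halin graph is a tree-union-cycle graph and hence $B_1$-VPG by Claim~\ref{clm:halin1vpg}, while tightness follows from the six-vertex Halin graph of Claim~\ref{clm:halinnotb0vpg}, which is not $B_0$-VPG. This matches the paper's (implicit) proof of the corollary via the preceding theorem.
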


\section{Halin graphs are \texorpdfstring{$B_2$}{B2}-EPG graphs}
\label{sec: b2}
We relabel the vertices in $C$ as $a_0,a_1,\dots,a_{k-1}$ and reassign the root of the tree $T$ at a new internal vertex. Recall that every internal vertex in the path $b_0Tb_1$ except one has degree two. Let $b'_0$ be the internal vertex of $T$ adjacent to $b_0$ and let $b'_1$ be the internal vertex of $T$ that is adjacent to $b_1$ (note that it is possible to have $b'_0=b'_1$). If $b'_0$ has degree two, then define $a_j=b_{(j+1)\mod k}$, for $0\leq j\leq k-1$, and define $r=b'_1$. If $b'_0$ has degree more than two, then we define $a_j=b_{(k-j)\mod k}$, for $0\leq j\leq k-1$, and define $r=b'_0$. For the rest of the section, wherever we use $r$ it will mean our new choice of the root. The ancestor-descendant relationship and height is also defined with respect to this new root. 

The following is an easy consequence of our choice of root and Lemma~\ref{lem: lin}.

\begin{obs}
\label{obs:newlin}
For any vertex $u\in S$, the vertices in $L^r(u)$ appear consecutively in $a_0, a_1,\dots, a_{k-1}$.
\end{obs}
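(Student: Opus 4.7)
The plan is to reduce the statement directly to Lemma~\ref{lem: lin} applied to the pair of consecutive leaves $(b_0,b_1)$ in the cyclic ordering $b_0,b_1,\dots,b_{k-1}$ of $V(C)$. The key observation is that both candidate roots $b'_0$ and $b'_1$, being internal vertices of $T$ adjacent to $b_0$ and $b_1$ respectively, lie on the tree path $b_0Tb_1$. So regardless of whether Case 1 ($b'_0$ has degree two, $r=b'_1$) or Case 2 ($b'_0$ has degree greater than two, $r=b'_0$) obtains, the hypothesis of Lemma~\ref{lem: lin} is satisfied with $c_i=b_0$ and $c_{i+1}=b_1$. The lemma therefore yields that for every $u\in S$, the set $L^r(u)$ appears as a consecutive block in the linear ordering
$$b_1,\,b_2,\,\dots,\,b_{k-1},\,b_0.$$

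Next I would match this linear ordering with the definition of the sequence $a_0,a_1,\dots,a_{k-1}$ in each of the two cases. In Case 1, we have $a_j=b_{(j+1)\bmod k}$, so the sequence $a_0,a_1,\dots,a_{k-1}$ is literally $b_1,b_2,\dots,b_{k-1},b_0$, and the conclusion is immediate. In Case 2, we have $a_j=b_{(k-j)\bmod k}$, so the sequence $a_0,a_1,\dots,a_{k-1}$ equals $b_0,b_{k-1},b_{k-2},\dots,b_1$, which is precisely the reversal of the ordering output by Lemma~\ref{lem: lin}. Since reversing a linear ordering preserves the property that any particular subset forms a consecutive block, the statement for Case 2 also follows.

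The argument is essentially bookkeeping, matching two index parametrizations against the linear ordering supplied by Lemma~\ref{lem: lin}, and there is no real obstacle: both $b'_0$ and $b'_1$ are automatically on $b_0Tb_1$, and the only subtle point is recognizing that the reversal in Case 2 does not affect consecutiveness. This matches the authors' description of the statement as an easy consequence of the choice of root together with Lemma~\ref{lem: lin}.
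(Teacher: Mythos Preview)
Your proposal is correct and follows exactly the approach the paper indicates: the observation is stated as an easy consequence of the choice of root and Lemma~\ref{lem: lin}, and you have simply spelled out the bookkeeping---verifying that $r\in\{b'_0,b'_1\}$ lies on $b_0Tb_1$, invoking Lemma~\ref{lem: lin} for the pair $(b_0,b_1)$, and matching the resulting ordering $b_1,\dots,b_{k-1},b_0$ with the $a$-labelling in each case (identity in Case~1, reversal in Case~2). Nothing is missing.
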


\begin{obs}
\label{obs:parentroot}
$r$ is the parent of $a_0$.
\end{obs}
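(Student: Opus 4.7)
The plan is to do a straightforward case analysis following the definition of $r$ and $a_0$ given in the paragraph immediately preceding the observation. Recall that $b'_0$ is the internal vertex of $T$ adjacent to the leaf $b_0$, and $b'_1$ is the internal vertex of $T$ adjacent to the leaf $b_1$, and that by the choice of $b_0,b_1$ from Lemma~\ref{lem: path}, every internal vertex on the path $b_0Tb_1$ except possibly one has degree two. Thus exactly one of $b'_0, b'_1$ may have degree more than two, and the case split in the construction of $r$ is based on which one it is.

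First I would handle the case when $b'_0$ has degree two. In this case we defined $a_j=b_{(j+1)\bmod k}$ and $r=b'_1$, so in particular $a_0=b_1$. Since $b_1$ is a leaf adjacent to $b'_1$ in $T$, and we have rooted $T$ at $r=b'_1$, the unique neighbour of the leaf $a_0=b_1$ in $T$, namely $b'_1=r$, is its parent. Hence $r$ is the parent of $a_0$ in this case.

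Next I would handle the case when $b'_0$ has degree more than two. Here we defined $a_j=b_{(k-j)\bmod k}$ and $r=b'_0$, so $a_0=b_{k\bmod k}=b_0$. Since $b_0$ is a leaf adjacent only to $b'_0$ in $T$, and we rooted $T$ at $r=b'_0$, the vertex $b'_0=r$ is the parent of the leaf $a_0=b_0$. Hence $r$ is again the parent of $a_0$.

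Since the construction divides into exactly these two cases and in both of them the claim holds, the observation follows. There is no real obstacle here; the only thing worth being careful about is verifying that the indexing convention $(k-j)\bmod k$ indeed yields $a_0=b_0$ when $j=0$, and that in each case the designated $r$ is genuinely adjacent (in $T$) to the designated $a_0$, which is immediate from the definitions of $b'_0$ and $b'_1$.
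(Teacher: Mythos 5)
Your proof is correct and follows exactly the reasoning the paper intends: the observation is stated without proof as an immediate consequence of the relabelling, and your two-case check (that $a_0=b_1$, $r=b'_1$ in one case and $a_0=b_0$, $r=b'_0$ in the other, with $r$ in each case the unique tree-neighbour of the leaf $a_0$) is precisely that argument.
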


Denote by $a'$ the parent of $a_{k-1}$.

\begin{obs}
\label{obs:lastaparent}
Either $a'$ has degree two or $a'=r$. 
\end{obs}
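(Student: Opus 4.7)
The plan is to split into the two cases that arise in the definition of $r$ in this section, identify $a'$ explicitly in each, and then invoke Lemma~\ref{lem: path} in the harder case. Recall that by definition $a'$ is the parent of $a_{k-1}$ in the tree rooted at $r$.

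In the first case $b'_0$ has degree two, so $a_j = b_{(j+1)\bmod k}$ and $r = b'_1$, which gives $a_{k-1} = b_0$. Since $b_0$ is a leaf, its unique neighbor in $T$ is $b'_0$; hence in the re-rooted tree the parent of $b_0$ is $b'_0$, so $a' = b'_0$ has degree two and we are done immediately. In the second case $b'_0$ has degree greater than two, so $a_j = b_{(k-j)\bmod k}$ and $r = b'_0$, which gives $a_{k-1} = b_1$. Since $b_1$ is a leaf whose unique neighbor in $T$ is $b'_1$, we obtain $a' = b'_1$.

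For this second case I would then apply Lemma~\ref{lem: path}: every internal vertex of the path $b_0Tb_1$ except one has degree two. Because $b'_0$ lies on this path and has degree greater than two by the case assumption, $b'_0$ must be that single exception. Since $b'_1$ is also an internal vertex lying on $b_0Tb_1$, there are two sub-cases. If $b'_1 = b'_0$, then $a' = b'_1 = b'_0 = r$, and the second alternative of the observation holds. If $b'_1 \neq b'_0$, then $b'_1$ must have degree two by the uniqueness of the exceptional vertex on the path, and the first alternative holds.

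The only substantive point is the realization that the single degree-greater-than-two vertex guaranteed by Lemma~\ref{lem: path} is forced to be $b'_0$ in the second case; everything else is routine unwinding of the re-labeling $a_j$ and the re-rooted parent relation, and I do not anticipate any real obstacle.
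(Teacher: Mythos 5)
Your proposal is correct and follows essentially the same route as the paper: the same case split on the degree of $b'_0$, the identification of $a'$ as $b'_0$ (resp.\ $b'_1$) via the relabelling, and the appeal to the uniqueness of the non-degree-two internal vertex on $b_0Tb_1$ from Lemma~\ref{lem: path} to split the second case into $b'_0=b'_1$ (giving $a'=r$) and $b'_0\neq b'_1$ (giving $\deg(a')=2$). No gaps.
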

\begin{proof}
If $r=b'_1$, from our labelling, we have $a_{k-1} = b_0$. Clearly $b'_1$ has been chosen as a root because degree of $b'_0$ is two. Hence in this case $a'$ has degree two. If $r=b'_0$, from our labelling, we have $a_{k-1} = b_1$. In this case we know that $b'_0$ has degree more than two. Since exactly one vertex of the path $b'_0Tb'_1$ has degree other than two in $T$, we must either have $b'_0\neq b'_1$, in which case $b'_1=a'$ has degree $2$, or $b'_0=b'_1$, in which case $a'=r$. This completes the proof.
\end{proof}

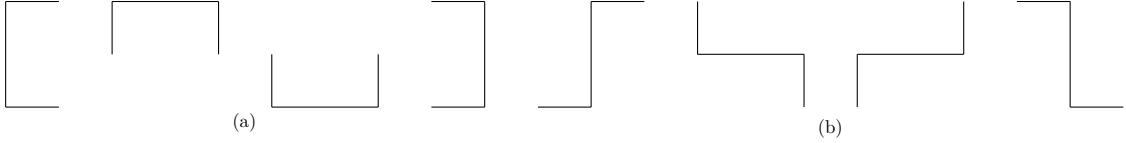
\begin{figure}[h]
\centering
\scalebox{0.7}
{
\begin{pspicture}(0,-1.3284374)(21.01,1.2984375)
\psline[linewidth=0.02cm](0.0,1.2884375)(1.0,1.2884375)
\psline[linewidth=0.02cm](0.0,1.2884375)(0.0,-0.7115625)
\psline[linewidth=0.02cm](0.0,-0.7115625)(1.0,-0.7115625)
\psline[linewidth=0.02cm](8.0,1.2884375)(9.0,1.2884375)
\psline[linewidth=0.02cm](9.0,1.2884375)(9.0,-0.7115625)
\psline[linewidth=0.02cm](9.0,-0.7115625)(8.0,-0.7115625)
\psline[linewidth=0.02cm](2.0,0.2884375)(2.0,1.2884375)
\psline[linewidth=0.02cm](2.0,1.2884375)(4.0,1.2884375)
\psline[linewidth=0.02cm](4.0,1.2884375)(4.0,0.2884375)
\psline[linewidth=0.02cm](5.0,0.2884375)(5.0,-0.7115625)
\psline[linewidth=0.02cm](5.0,-0.7115625)(7.0,-0.7115625)
\psline[linewidth=0.02cm](7.0,0.2884375)(7.0,-0.7115625)
\psline[linewidth=0.02cm](11.0,1.2884375)(12.0,1.2884375)
\psline[linewidth=0.02cm](11.0,1.2884375)(11.0,-0.7115625)
\psline[linewidth=0.02cm](10.0,-0.7115625)(11.0,-0.7115625)
\psline[linewidth=0.02cm](13.0,1.2884375)(13.0,0.2884375)
\psline[linewidth=0.02cm](13.0,0.2884375)(15.0,0.2884375)
\psline[linewidth=0.02cm](15.0,0.2884375)(15.0,-0.7115625)
\psline[linewidth=0.02cm](16.0,-0.7115625)(16.0,0.2884375)
\psline[linewidth=0.02cm](16.0,0.2884375)(18.0,0.2884375)
\psline[linewidth=0.02cm](18.0,1.2884375)(18.0,0.2884375)
\psline[linewidth=0.02cm](19.0,1.2884375)(20.0,1.2884375)
\psline[linewidth=0.02cm](20.0,1.2884375)(20.0,-0.7115625)
\psline[linewidth=0.02cm](20.0,-0.7115625)(21.0,-0.7115625)
\rput(4.485156,-1.0065625){(a)}
\rput(15.495156,-1.1065625){(b)}
\end{pspicture} 
}
\caption{(a) Four different types of C-shaped curves. (b) Four different types of S-shaped curves}
\label{fig: fig03}
\end{figure}

In a $B_2$-EPG representation of a graph, each vertex is represented by a piecewise linear curve in the plane made up of three line segments each of which is horizontal or vertical with consecutive segments being of different orientation. It is not hard to see that such a curve can have any of eight different shapes---i.e., it can be a C-shaped curve and its rotations, or an S-shaped curve and its rotations and reflections (see Figure~\ref{fig: fig03}). We show that $G$ has an EPG representation using C-shaped curves of one type. Later, we show that $G$ also has an EPG representation using S-shaped curves.

A C-shaped curve is defined as a set of points given by $\mathcal{C}(x_1, x_2, x_3, y_1, y_2) = \{(x, y)~\vert~x = x_1\ \mbox{and}\ y \in [y_1, y_2]\} \cup \{ (x, y)~\vert~ y = y_1\ \mbox{and}\ x \in [x_1, x_2]\} \cup \{(x, y)~\vert~ y = y_2\ \mbox{and}\ x \in [x_1, x_3]\}$. Our aim is to associate a C-shaped curve $\mathcal{C}_u$ to each vertex $u\in V(G)$ such that $uv\in E(G)$ if and only if $\mathcal{C}_u\cap \mathcal{C}_v$ contains at least a horizontal or a vertical line segment of non-zero length. For any vertex $u \in V(G)$ with $\mathcal{C}_u = \mathcal{C}(x_1,x_2,x_3,y_1,y_2)$, define $l_x(u)=x_1, p_x(u)=x_2, q_x(u) = x_3, l_y(u)=y_1$ and $r_y(u)=y_2$. Define $h = \max \{ h^{r}(u)~|~u \in V(G)\}$. Let $\epsilon = 1/4$.
\medskip

For every leaf $a_i$ other than $a_0$ and $a_{k-1}$, define $\mathcal{C}_{a_i} = \mathcal{C}(i, i+1+\epsilon, i+3\epsilon, 0, h - h^{r}(a_i)+1)$.

Define $\mathcal{C}_{a_0} = \mathcal{C}(0, 1+\epsilon, k-1+\epsilon, 0, h+1)$.

Define $\mathcal{C}_{a_{k-1}} = \mathcal{C}(k-1, k+\epsilon, k-1+\epsilon, 0, h+1)$.

For every internal vertex $v$ other than $r$ and $a'$, define $\mathcal{C}_v = \mathcal{C}(\min\{i+2\epsilon ~\vert~ a_i \in L^{r}(v)\}, \max\{i+3\epsilon~\vert~ a_i \in L^{r}(v)\},\min\{i+3\epsilon~\vert~ a_i \in L^{r}(v)\}, h-h^{r}(v), h-h^{r}(v)+1)$. 

If $a'\neq r$, define $\mathcal{C}_{a'} = \mathcal{C}(k-1+2\epsilon, k-1+3\epsilon, k-1+3\epsilon, 0, h - h^{r}(a')+1)$ and define $\mathcal{C}_r=\mathcal{C}(2\epsilon,k-1+3\epsilon,1,h,h+1)$.

If $a'=r$, define $\mathcal{C}_r=\mathcal{C}(2\epsilon,k-1+3\epsilon,k-1+\epsilon,h,h+1)$. (Note that we can get a valid representation of $G$ in which no two curves cross more than once by setting $p_x(a')=k-2+3\epsilon$. But here, we are using $p_x(a')=k-1+3\epsilon$ so that the proof becomes a little bit simpler.)

Our aim is now to prove that $\{\mathcal{C}_u\}_{u\in V(G)}$ forms a valid $B_2$-EPG representation of $G$.
We first make some observations.

\begin{obs}
\label{obs: comp}
Let $u$ be the parent of $v$.
\begin{enumerate}[(a)]
\item If $v \notin \{a_0, a_{k-1}\}$, then $l_y(u) = r_y(v)$ and $|[l_x(u), p_x(u)] \cap [l_x(v), q_x(v)]| > 1$, 
\item If $v \in \{a_0, a_{k-1}\}$ and $u= r$, then $r_y(u) = r_y(v)$ and $\vert[l_x(u), q_x(u)] \cap [l_x(v), q_x(v)]| > 1$,  and 
\item If $v = a_{k-1}$ and $u \neq r$, then $l_y(u) = l_y(v)$ and $\vert[l_x(u), p_x(u)] \cap [l_x(v), p_x(v)]| > 1$.
\end{enumerate}
\end{obs}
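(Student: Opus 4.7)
The plan is to prove each of the three parts (a), (b), (c) separately by unrolling the definitions of the $\mathcal{C}_u, \mathcal{C}_v$ curves given just above the observation, then verifying the two claimed coordinate relations directly. Throughout I will repeatedly use the fact that when $u$ is the parent of $v$ we have $h^r(u)=h^r(v)-1$ and $L^r(v)\subseteq L^r(u)$ (hence in particular $v\in L^r(u)$ whenever $v$ is a leaf).

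For part (a), since $v\notin\{a_0,a_{k-1}\}$, either $v$ is a leaf $a_i$ with $0<i<k-1$ or $v$ is an internal vertex distinct from $r$ (because $v$ has a parent). In either sub-case, the formula for $r_y(v)$ works out to $h-h^r(v)+1$, and since $u$ is the parent of $v$ and $u\neq a'$ (by Observation~\ref{obs:lastaparent} the only child of $a'$ is $a_{k-1}$, which is excluded here), the formula for $l_y(u)$ yields $h-h^r(u)=h-h^r(v)+1=r_y(v)$; this also works when $u=r$, since then $h^r(u)=0$ and $l_y(r)=h$ while $h^r(v)=1$. For the $x$-coordinates, $v\in L^r(u)$ lets us read off $l_x(u)\le i_v+2\epsilon$ and $p_x(u)\ge i_v+3\epsilon$ (where $i_v$ is the index with $a_{i_v}\in L^r(v)$ achieving the min/max; if $v$ itself is the leaf $a_{i_v}$ this is even more direct), while $l_x(v)\le i_v+2\epsilon$ and $q_x(v)\ge i_v+3\epsilon$; so $[l_x(u),p_x(u)]\cap[l_x(v),q_x(v)]$ contains an interval of length at least $\epsilon>0$.

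For part (b), I split on $v=a_0$ and $v=a_{k-1}$. In both, $r_y(v)=h+1=r_y(r)$ by the explicit definitions of $\mathcal{C}_{a_0},\mathcal{C}_{a_{k-1}},\mathcal{C}_r$, handling both the $a'\neq r$ and $a'=r$ definitions of $\mathcal{C}_r$ uniformly. For the $x$-overlap, in the case $v=a_0$ I verify that $[l_x(r),q_x(r)]$ equals either $[2\epsilon,1]$ or $[2\epsilon,k-1+\epsilon]$, and $[l_x(a_0),q_x(a_0)]=[0,k-1+\epsilon]$, so the intersection has length at least $1-2\epsilon>0$. In the case $v=a_{k-1}$, which by Observation~\ref{obs:parentroot} and the hypothesis forces $a'=r$, I note $[l_x(r),q_x(r)]=[2\epsilon,k-1+\epsilon]$ and $[l_x(a_{k-1}),q_x(a_{k-1})]=[k-1,k-1+\epsilon]$, giving an intersection of length $\epsilon>0$ (using $k\ge 3$, which holds because $S$ is not a singleton).

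Part (c) is the most mechanical: the hypothesis forces $u=a'\neq r$, so the explicit formula $\mathcal{C}_{a'}=\mathcal{C}(k-1+2\epsilon,k-1+3\epsilon,k-1+3\epsilon,0,h-h^r(a')+1)$ gives $l_y(a')=0=l_y(a_{k-1})$, and $[l_x(a'),p_x(a')]=[k-1+2\epsilon,k-1+3\epsilon]$ is contained in $[l_x(a_{k-1}),p_x(a_{k-1})]=[k-1,k+\epsilon]$ (since $3\epsilon<1$), so the intersection equals $[k-1+2\epsilon,k-1+3\epsilon]$, of length $\epsilon>0$. The main subtlety across the three parts is keeping the case-split clean between the ``special'' vertices $r,a',a_0,a_{k-1}$ (whose curves are defined by separate formulae) and the ``generic'' vertices, and confirming that no hypothesis of the observation ever forces a $u$–$v$ pair that would require mixing incompatible definitions; this is what Observation~\ref{obs:lastaparent} rules out in part (a).
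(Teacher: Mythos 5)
Your proof is correct and takes essentially the same route as the paper's: unroll the explicit formulas for the C-shaped curves, split into the same special/generic cases ($r$, $a'$, $a_0$, $a_{k-1}$ versus the rest), and exhibit a subinterval of length $\epsilon$ in each required $x$-overlap. One small wording point: in part (a) the precise exclusion is that $u=a'$ with $a'\neq r$ cannot occur (if $a'=r$ then $u$ may well equal $a'$, but that is just the $u=r$ case you already handle, and the special formula for $\mathcal{C}_{a'}$ is only used when $a'\neq r$), which is exactly how the paper phrases it.
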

\begin{proof}
If $v \notin \{a_0, a_{k-1}\}$, we cannot have $u = a'$ and $a' \neq r$. Therefore, from the construction, we have $l_y(u) = h - h^r(u)$ and $r_y(v) = h - h^r(v) +1$. Clearly, since $u$ is the parent of $v$ we have $h^r(v) = h^r(u)+1$. This implies $l_y(u) = r_y(v)$. First, let us consider the case when $v$ is an internal vertex. Since $u$ is the parent of $v$, $L^r(v) \subseteq L^r(u)$. From the construction, we have $l_x(u) = \min\{i+2\epsilon ~\vert~ a_i \in L^{r}(u)\} \leq \min\{i+2\epsilon ~\vert~ a_i \in L^{r}(v)\} = l_x(v)$ and $q_x(v) = \min\{i+3\epsilon~\vert~ a_i \in L^{r}(v)\} \leq \max\{i+3\epsilon~\vert~ a_i \in L^{r}(u)\} = p_x(u)$. (Note that this holds also when $v = a^\prime$, since in that case $a' \neq r$, and therefore $L^r(a^\prime) = \{a_{k-1}\}$. Also note that it is possible that $u = r$.) Clearly from our construction we have $l_x(u) < p_x(u)$ and $l_x(v) < q_x(v)$. Since we have $l_x(u) \leq l_x(v) < q_x(v) \leq p_x(u)$, we conclude that $|[l_x(u), p_x(u)] \cap [l_x(v), q_x(v)]| > 1$. Let us consider the case when $v=a_j$ is a leaf. Clearly, we have $v\in L^r(u)$. Then from the construction, we have $l_x(u) = \min\{i+2\epsilon ~\vert~ a_i \in L^{r}(u)\}$, $p_x(u)=\max\{i+3\epsilon~|~a_i\in L^r(u)\}$, $l_x(v) = j$ and $q_x(v) = j+3\epsilon$. Since $v \in L^r(u)$, it implies that $l_x(u) \leq j+2\epsilon$ and $p_x(u) \geq j+3\epsilon$. Hence $[l_x(u), p_x(u)] \cap [l_x(v), q_x(v)]$ contains the interval $[j+2\epsilon, j+3\epsilon]$. This implies that $|[l_x(u), p_x(u)] \cap [l_x(v), q_x(v)]| > 1$. This completes the proof of (a).

If $u = r$ and $v = a_0$, from the construction we have $r_y(u) = h+1 = r_y(v)$. Again from the construction we have $[l_x(u), q_x(u)] = [2\epsilon, k-1+\epsilon]$ if $u = r = a^\prime$ and $[l_x(u), q_x(u)] = [2\epsilon, 1]$ if $u = r \neq a^\prime$. Also we have $[l_x(v), q_x(v)] = [0, k-1+\epsilon]$. Hence $[2\epsilon, 1] \subset [l_x(u), q_x(u)] \cap [l_x(v), q_x(v)]$, which implies $\vert[l_x(u), q_x(u)] \cap [l_x(v), q_x(v)]\vert > 1$. Now consider the case $u = r$ and $v = a_{k-1}$. As $u$ is the parent of $v$, we have $u = r = a^\prime$. From the construction we have $r_y(u) = h+1 = r_y(v)$, $[l_x(u), q_x(u)] = [2\epsilon, k-1+\epsilon]$ and $[l_x(v), q_x(v)] = [k-1, k-1+\epsilon]$. As $[l_x(u),q_x(u)] \cap [l_x(v),q_x(v)] = [k-1,k-1+\epsilon]$, we have $|[l_x(u),q_x(u)] \cap [l_x(v),q_x(v)]| > 1$. This completes the proof of (b).

If $v= a_{k-1}$ and $u\neq r$, then from the construction we have $l_y(v) = 0$ and $l_y(u) = 0$. From the construction $[l_x(u), p_x(u)] = [k-1+2\epsilon, k-1+3\epsilon]$ and $[l_x(v), p_x(v)] =[k-1, k+\epsilon]$. This implies that $[l_x(u), p_x(u)] \cap [l_x(v), p_x(v)]=[k-1 +2\epsilon , k-1 +3\epsilon]$. Hence $\vert[l_x(u), p_x(u)] \cap [l_x(v), p_x(v)]| > 1$. This completes proof of (c).
\end{proof}

\begin{obs}
\label{obs: inc}
Let $u$ and $v$ be two vertices such that neither of them is an ancestor of the other. Furthermore, let $u$ be an internal vertex. Then,
\begin{enumerate}[(a)]
\item $[l_x(u),p_x(u)] \cap [l_x(v),p_x(v)] = \emptyset$, and
\item If $v \neq a_0$, then $[l_x(u),\max\{p_x(u),q_x(u)\}] \cap [l_x(v),\max\{p_x(v),q_x(v)\}] = \emptyset$.
\end{enumerate}
\end{obs}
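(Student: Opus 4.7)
The plan is to first turn the non-ancestor hypothesis into the combinatorial statement $L^r(u)\cap L^r(v)=\emptyset$ via Lemma~\ref{lem: ancdec}, and then translate this disjointness into disjointness of the relevant $x$-intervals. By Observation~\ref{obs:newlin}, $L^r(u)$ is a consecutive block $\{a_i,a_{i+1},\dots,a_j\}$ in the linear order on the leaves. Moreover, since $r$ is an ancestor of every vertex, the hypothesis forces $u\neq r$.

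The main reduction I would establish is a uniform formula: for every internal vertex $w$ with $L^r(w)=\{a_{i_w},\dots,a_{j_w}\}$,
\[
[l_x(w),\,p_x(w)] \;=\; [\,i_w+2\epsilon,\; j_w+3\epsilon\,].
\]
This I would verify by direct inspection of the construction, handling separately a generic internal vertex, then $w=r$ (with sub-cases $a'=r$ and $a'\neq r$), and finally $w=a'$ when $a'\neq r$; in each case the explicit coordinates assigned by the construction match the formula.

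For part (a), I would split on whether $v$ is internal or a leaf. If $v$ is internal, the uniform formula gives $[l_x(v),p_x(v)]=[i'+2\epsilon,j'+3\epsilon]$, and $L^r(u)\cap L^r(v)=\emptyset$ together with Observation~\ref{obs:newlin} implies (without loss of generality) $j<i'$; then, using $\epsilon=\tfrac14$, I get $p_x(u)=j+3\epsilon<i'+2\epsilon=l_x(v)$, so the intervals are disjoint. If $v=a_m$ is a leaf with $a_m\notin L^r(u)$, then $m<i$ or $m>j$, and I would handle the ranges $m=0$, $0<m<k-1$, and $m=k-1$ separately, noting in each case that $[l_x(a_m),p_x(a_m)]\subseteq[m,m+1+\epsilon]$ with $l_x(a_m)=m$; combining this with the uniform formula and a single inequality using $\epsilon=\tfrac14$ rules out any intersection with $[l_x(u),p_x(u)]$.

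For part (b), the key point is that $\max\{p_x(w),q_x(w)\}=p_x(w)$ for every vertex $w\neq a_0$. I would justify this by inspection: for a generic internal vertex, $p_x$ and $q_x$ are the maximum and minimum of the same finite set; for $w\in\{r,a'\}$ the inequality follows from $k\geq 3$; and for a leaf $w\neq a_0$ it is an immediate numerical comparison. Thus, when $v\neq a_0$, both $u$ (which is internal) and $v$ satisfy this identity, so replacing each $\max$ by $p_x$ shows that (b) is literally the statement (a). The only obstacle anywhere is the bookkeeping for the distinguished vertices $r$ and $a'$ and for the boundary leaves $a_0$ and $a_{k-1}$; once the uniform formula for internal vertices is in hand, everything else reduces to elementary arithmetic with $\epsilon=\tfrac14$.
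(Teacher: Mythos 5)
Your proof is correct and follows essentially the same route as the paper: reduce the non-ancestry hypothesis to $L^r(u)\cap L^r(v)=\emptyset$ via Lemma~\ref{lem: ancdec}, combine this with Observation~\ref{obs:newlin} and the formula $[l_x(w),p_x(w)]=[l_{\min}(w)+2\epsilon,\,l_{\max}(w)+3\epsilon]$ for internal $w$, split on whether $v$ is internal or a leaf, and reduce (b) to (a) by noting $\max\{p_x(w),q_x(w)\}=p_x(w)$ for all $w\neq a_0$. The only differences are cosmetic (you verify the interval formula and the $\max=p_x$ identity case by case a bit more explicitly than the paper does).
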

\begin{proof}
Let us first prove (a). From Lemma~\ref{lem: ancdec}, we have $L^{r}(u) \cap L^{r}(v) = \emptyset$. First we consider the case when both $u$ and $v$ are internal vertices. Let us define $l_{\min}(u)=\min\{i~|~a_i \in L^r(u)\}$ and $l_{\max}(u)=\max \{i~|~a_i \in L^r(u)\}$. From Observation~\ref{obs:newlin} and $L^{r}(u) \cap L^{r}(v) = \emptyset$, we conclude $[l_{\min}(u),l_{\max}(u)] \cap [l_{\min}(v),l_{\max}(v)] = \emptyset$. Again from our construction, we have $[l_x(w), p_x(w)] = [l_{\min}(w) + 2\epsilon, l_{\max}(w) + 3\epsilon]$, when $w \in S$. Hence we conclude $[l_x(u),p_x(u)] \cap [l_x(v),p_x(v)] = \emptyset$ (note that $\epsilon = 1/4$ implies $3\epsilon < 1$). Let us consider the case when $v = a_j$ is a leaf. As $v$ is not a descendant of $u$, we have $v \notin L^r(u)$. Following Observation~\ref{obs:newlin} this implies that $j \notin [l_{\min}(u),l_{\max}(u)]$. From the construction we have $[l_x(v),p_x(v)] = [j, j+1+\epsilon]$. Again, from the construction we have $[l_x(u),p_x(u)] = [l_{\min}(u)+2\epsilon, l_{\max}(u)+3\epsilon]$. Hence $[l_x(u),p_x(u)] \cap [l_x(v),p_x(v)] = \emptyset$ as $\epsilon = 1/4$. This completes the proof of (a).

Let us now prove (b). Clearly, $u \neq r$ and $v \neq r$, because neither of $u$ or $v$ is a descendant of the other. This tells us that $u,v \notin \{a_0,r\}$. From the construction, we know that for every vertex $w \notin \{a_0,r\}$, $\max\{p_x(w), q_x(w)\} = p_x(w)$. So it is sufficient to show that, $[l_x(u),p_x(u)] \cap [l_x(v),p_x(v)] = \emptyset$. This follows from (a). 
\end{proof}

\begin{obs}
\label{obs: leaves}
Let $u$ and $v$ be both leaves. If $u$ and $v$ are consecutive leaves then either: 
\begin{enumerate}[(a)]
\item $l_y(u) = l_y(v)$ and $\vert[l_x(u), p_x(u)] \cap [l_x(v), p_x(v)]\vert > 1$, or
\item $r_y(u) = r_y(v)$ and $\vert[l_x(u), q_x(u)] \cap [l_x(v), q_x(v)]\vert > 1$.
\end{enumerate}
\end{obs}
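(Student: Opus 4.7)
I would prove the observation by a direct case analysis on the explicit C-shaped curves assigned to the leaves. Consecutive leaf pairs on the cycle come in two flavours: the ``interior'' pairs $\{a_i,a_{i+1}\}$ with $0\le i\le k-2$, and the ``wrap-around'' pair $\{a_0,a_{k-1}\}$. The plan is to show that every interior pair satisfies condition (a), while the wrap-around pair satisfies condition (b).

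For any interior pair $\{a_i,a_{i+1}\}$, the construction sets $l_y(a_j)=0$ for every leaf $a_j$, so $l_y(a_i)=l_y(a_{i+1})=0$ is automatic. For the overlap, I would split into the sub-cases $i=0$, $1\le i\le k-3$, and $i=k-2$ according to whether one of the two leaves in the pair is one of the special leaves $a_0$ or $a_{k-1}$. Plugging in the formulas in each sub-case gives
\[
[l_x(a_i),p_x(a_i)]\cap[l_x(a_{i+1}),p_x(a_{i+1})]\supseteq [i+1,\,i+1+\epsilon],
\]
which is a non-degenerate segment of length $\epsilon=1/4$, hence certainly contains more than one point, so condition (a) holds.

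For the wrap-around pair $\{a_0,a_{k-1}\}$, both $\mathcal{C}_{a_0}$ and $\mathcal{C}_{a_{k-1}}$ have their top horizontal segment at $y$-coordinate $h+1$, giving $r_y(a_0)=r_y(a_{k-1})=h+1$; and directly from the formulas,
\[
[l_x(a_0),q_x(a_0)]\cap[l_x(a_{k-1}),q_x(a_{k-1})]=[0,k-1+\epsilon]\cap[k-1,k-1+\epsilon]=[k-1,k-1+\epsilon],
\]
again a segment of length $\epsilon>0$, so condition (b) is satisfied.

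There is no genuine obstacle here: the argument is a mechanical check of a handful of sub-cases using the explicit formulas. The only thing one must be careful about is the asymmetry between the special leaves $a_0,a_{k-1}$ and the remaining ``middle'' leaves---in particular, that the construction is rigged so that $a_0$ and $a_{k-1}$ meet along a piece of their \emph{top} horizontal segments (those whose $x$-extent is recorded by $q_x$) rather than along their bottom segments, which is precisely why the wrap-around pair yields condition (b) instead of (a).
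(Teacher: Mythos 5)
Your proof is correct and follows essentially the same route as the paper: check that consecutive pairs $\{a_i,a_{i+1}\}$ share the bottom segment $y=0$ with overlap $[i+1,i+1+\epsilon]$ (giving condition (a)), and that the pair $\{a_0,a_{k-1}\}$ shares the top segment $y=h+1$ with overlap $[k-1,k-1+\epsilon]$ (giving condition (b)). The only difference is that you spell out the sub-cases $i=0$ and $i=k-2$ explicitly, which the paper's computation subsumes without comment.
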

\begin{proof}
Suppose $u = a_i$ and $v = a_{i+1}$ are consecutive leaves where $0 \leq i \leq k-2$. From the construction it is clear that $l_y(u) = l_y(v) = 0$ for all $i$. Again from the construction we have $[l_x(u), p_x(u)] \cap [l_x(v), p_x(v)] = [i+1, i+1+\epsilon]$. Hence they satisfy (a). When $u = a_0$ and $v = a_{k-1}$, from the construction it is clear that $r_y(u) = r_y(v) = h+1$. Then $[l_x(u), q_x(u)] \cap [l_x(v), q_x(v)] = [k-1, k-1+\epsilon]$. Hence they satisfy (b).
\end{proof}

We are now ready to prove that $\{\mathcal{C}_u\}_{u\in V(G)}$ forms a valid $B_2$-EPG representation of $G$.

\begin{clm}
\label{clm: halin2epg}
For distinct $u,v\in V(G)$, $uv\in E(G)$ if and only if $\mathcal{C}_u\cap\mathcal{C}_v$ contains a horizontal or a vertical line segment of non-zero length.
\end{clm}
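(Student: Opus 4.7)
The plan is to prove the biconditional by separating the forward direction from the converse, reducing the intersection condition to an explicit list of coordinate conditions, and then running a case analysis organized by the graph-theoretic relationship between $u$ and $v$. As a preliminary, I would record the elementary reduction that $\mathcal{C}_u \cap \mathcal{C}_v$ contains a segment of positive length if and only if one of the following holds: (H) some horizontal segment of $\mathcal{C}_u$ and some horizontal segment of $\mathcal{C}_v$ lie on a common horizontal line and their $x$-intervals overlap in an interval of positive length, or (V) the vertical segments of $\mathcal{C}_u$ and $\mathcal{C}_v$ lie on a common vertical line and their $y$-intervals overlap in an interval of positive length. This is because a horizontal and a vertical segment can share at most a single point.

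For the forward direction, the edges of $G$ are exactly the parent-child pairs in $T$ together with the consecutive-leaf pairs on $C$. In each parent-child subcase, Observation~\ref{obs: comp} exhibits a horizontal line shared by $\mathcal{C}_u$ and $\mathcal{C}_v$ on which their $x$-intervals overlap in an interval of length greater than $1$, yielding (H). Observation~\ref{obs: leaves} does the same for consecutive leaves. Hence every edge forces (H).

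For the converse, I would fix a non-adjacent pair $u, v$ and split into three cases: (i) both leaves but not consecutive, (ii) one internal and one a leaf that is not its child, (iii) both internal but not a parent-child pair. In (iii), if neither of $u, v$ is an ancestor of the other, Observation~\ref{obs: inc}(a) gives $[l_x(u), p_x(u)] \cap [l_x(v), p_x(v)] = \emptyset$, which kills every horizontal match (the top segments are governed by $q_x \le p_x$ for non-exceptional internals, and special cases are verified by hand); the vertical $x$-coordinates differ because $L^r(u) \cap L^r(v) = \emptyset$ forces distinct leftmost descendant leaves. If $u$ is a strict ancestor of $v$ but not its parent, the height gap $h^r(v) \ge h^r(u) + 2$ separates the $y$-levels of all horizontals of $\mathcal{C}_u$ and $\mathcal{C}_v$ by at least $1$ unit, and the vertical segments are disjoint in $y$ by the same gap even when $l_x(u) = l_x(v)$. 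Case (ii) runs along the same lines, additionally using that every leaf has $l_y = 0$ while every non-root internal vertex has $l_y \ge 1$, and Observation~\ref{obs: inc}(b) to handle the long top segment of $\mathcal{C}_{a_0}$. Case (i) is direct from the construction: bottoms of non-consecutive leaves lie on disjoint $x$-intervals at $y = 0$, and the only leaves reaching $y = h + 1$ are $a_0$ and $a_{k-1}$, which are themselves consecutive.

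I expect the main obstacle to be the bookkeeping forced by the exceptional shapes of $r$, $a'$, $a_0$, and $a_{k-1}$, whose C-curves deviate from the generic template (the long top segment of $\mathcal{C}_{a_0}$, the pair of top segments of $\mathcal{C}_{a_0}$ and $\mathcal{C}_{a_{k-1}}$ pinned at $y = h + 1$, the vertical of $\mathcal{C}_r$ lifted to the top of the picture, and the sliver shape of $\mathcal{C}_{a'}$). Every sub-case in the converse must be re-checked against these four distinguished vertices---for instance, verifying that $\mathcal{C}_r$'s left vertical at $x = 2\epsilon$ does not collide in $y$ with that of any internal descendant whose leftmost descendant leaf is $a_0$ (which is vacuous because the only such vertex is $r$ itself, since $r$ is the parent of $a_0$), and that $\mathcal{C}_{a_0}$'s long top at $y = h + 1$ meets only $\mathcal{C}_r$ and $\mathcal{C}_{a_{k-1}}$, both neighbours of $a_0$. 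The split $a' = r$ versus $a' \ne r$ also duplicates several of these checks. Once these boundary pairs are dispatched the generic cases close quickly, in the spirit of the proof of Claim~\ref{clm:halin1vpg}.
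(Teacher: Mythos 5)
Your proposal is correct in outline and follows essentially the same route as the paper: the same reduction of the segment-intersection condition to a list of coordinate conditions, the forward direction via Observations~\ref{obs: comp} and~\ref{obs: leaves}, and the converse by the same case analysis using Observation~\ref{obs: inc} together with height-gap arguments and special handling of $r$, $a'$, $a_0$, $a_{k-1}$. Two of your blanket statements are literally false exactly at the boundary vertices you flag --- the height-gap separation of horizontal levels in the strict-ancestor case fails for $u=r$, $v=a_{k-1}$ when $a'\neq r$ (both curves have a horizontal at $y=h+1$; the paper instead uses $q_x(r)=1<k-1=l_x(a_{k-1})$), and ``every non-root internal vertex has $l_y\geq 1$'' fails for $a'$, whose bottom lies at $y=0$ --- but both pairs are dispatched by the $x$-interval disjointness you already invoke, so these fall under your promised boundary checks rather than constituting genuine gaps.
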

\begin{proof}
It is easy to see that $\mathcal{C}_u \cap \mathcal{C}_v$ contains a horizontal or a vertical line segment of non-zero length if and only if $\mathcal{C}_u, \mathcal{C}_v$ satisfies at least one of the following.
\begin{enumerate}
\renewcommand{\theenumi}{(\arabic{enumi})}
\renewcommand{\labelenumi}{(\arabic{enumi})}
\item\label{cons:lyly} $l_y(u) = l_y(v)$ and $\vert[l_x(u), p_x(u)] \cap [l_x(v), p_x(v)]\vert > 1$, or
\item\label{cons:ryry} $r_y(u) = r_y(v)$ and $\vert[l_x(u), q_x(u)] \cap [l_x(v), q_x(v)]\vert > 1$, or
\item\label{cons:lyry} $l_y(u) = r_y(v)$ and $\vert[l_x(u), p_x(u)] \cap [l_x(v), q_x(v)]\vert > 1$, or
\item\label{cons:ryly} $l_y(v) = r_y(u)$ and $\vert[l_x(v), p_x(v)] \cap [l_x(u), q_x(u)]\vert > 1$, or
\item\label{cons:lxlx} $l_x(u) = l_x(v)$ and $\vert[l_y(u), r_y(u)] \cap [l_y(v), r_y(v)]\vert > 1$
\end{enumerate} 

We show that for distinct $u, v \in V(G)$, $uv \in E(G)$ if and only if $\mathcal{C}_u, \mathcal{C}_v$ satisfy any one of the conditions~\ref{cons:lyly} to~\ref{cons:lxlx}, thereby completing the proof.
Note that we only need to prove that $\mathcal{C}_u, \mathcal{C}_v$ satisfy any of the conditions~\ref{cons:lyly} to~\ref{cons:lxlx} if and only if $u$ is the parent of $v$, or $v$ is the parent of $u$, or $u, v$ are consecutive leaves. 

If $u$ is the parent of $v$, then from Observation~\ref{obs: comp}, $\mathcal{C}_u$ and $\mathcal{C}_v$ satisfy at least one of the conditions~\ref{cons:lyly}, \ref{cons:ryry} or \ref{cons:lyry}. When $u, v$ are consecutive leaves, from Observation~\ref{obs: leaves} we have that $\mathcal{C}_u, \mathcal{C}_v$ satisfy condition~\ref{cons:lyly} or~\ref{cons:ryry}. 

Conversely, we show that if $u$ is not a parent of $v$, $v$ is not a parent of $u$ and $u, v$ are not two consecutive leaves, then $\mathcal{C}_u,\mathcal{C}_v$ do not satisfy any one of the conditions~\ref{cons:lyly} to~\ref{cons:lxlx}. 

Suppose first that one of $u,v$ is an internal vertex. Without loss of generality assume that $u$ is an internal vertex. 

Suppose further that neither of $u,v$ is an ancestor of the other. Clearly, in this case neither $u$ nor $v$ is $r$. By Observation~\ref{obs: inc}(a), we know that $\mathcal{C}_u, \mathcal{C}_v$ do not satisfy condition~\ref{cons:lyly}. Observation~\ref{obs: inc}(a) also tells us that $l_x(u) \neq l_x(v)$ and therefore $\mathcal{C}_u, \mathcal{C}_v$ do not satisfy condition~\ref{cons:lxlx}. If $v \neq a_0$, then by Observation~\ref{obs: inc}(b), $\mathcal{C}_u, \mathcal{C}_v$ do not satisfy any of the conditions \ref{cons:ryry}, \ref{cons:lyry} and \ref{cons:ryly}. Let us suppose that $v = a_0$. Then $r_y(v)=h+1$. It is clear from our construction that since $u$ is an internal vertex other than $r$, $r_y(u) \neq h+1$ and therefore, $r_y(u) \neq r_y(v)$. This implies $\mathcal{C}_u, \mathcal{C}_v$ do not satisfy condition~\ref{cons:ryry}. Also, from our construction, there is no vertex $w$ such that $l_y(w)=h+1$, and therefore $\mathcal{C}_u, \mathcal{C}_v$ do not satisfy condition~\ref{cons:lyry}. Again, we have $l_y(v)=0$ and no vertex $w$ such that $r_y(w)=0$. Therefore $\mathcal{C}_u, \mathcal{C}_v$ do not satisfy condition~\ref{cons:ryly}.

Suppose that one of $u,v$ is an ancestor of the other. Without loss of generality assume that $u$ is an ancestor of $v$, but not the parent. Note that in this case, $u \neq a'$ and $v \neq a_0$. Clearly, $h^r(v) \geq h^r(u)+2$. Let $v \neq a_{k-1}$. From the construction it is clear that $r_y(v) = h-h^r(v)+1 < h-h^r(u) = l_y(u)$ and therefore, we have $[l_y(u), r_y(u)] \cap [l_y(v), r_y(v)] = \emptyset$. It follows that $\mathcal{C}_u, \mathcal{C}_v$ do not satisfy conditions~\ref{cons:lyly} to~\ref{cons:lxlx}. Let us consider the case when $v = a_{k-1}$. Suppose $u \neq r$. From the construction it is clear that $l_y(v) \neq l_y(u)$, $r_y(v) \neq r_y(u)$, $l_y(v) \neq r_y(u)$ and $r_y(v) \neq l_y(u)$. Also from the construction it is clear that $l_x(v) \neq l_x(u)$. Hence $\mathcal{C}_u, \mathcal{C}_v$ do not satisfy conditions~\ref{cons:lyly} to~\ref{cons:lxlx}. Suppose that $u = r$ and $v = a_{k-1}$. Then we have $l_x(u) \neq l_x(v)$, $l_y(u) \neq l_y(v)$, $l_y(u) \neq r_y(v)$ and $l_y(v) \neq r_y(u)$. Therefore, $\mathcal{C}_u$ and $\mathcal{C}_v$ do not satisfy conditions~\ref{cons:lyly}, \ref{cons:lyry}, \ref{cons:ryly} or \ref{cons:lxlx}. Also, since $q_x(u) = 1 < k-1 = l_x(v)$ (recall that that $r = u \neq a'$), we have $[l_x(u),q_x(u)] \cap [l_x(v),q_x(v)] = \emptyset$, implying that $\mathcal{C}_u$ and $\mathcal{C}_v$ do not satisfy condition~\ref{cons:ryry}.

Suppose $u$ and $v$ are two nonadjacent leaves. Then $l_x(u) \neq l_x(v)$. This implies $\mathcal{C}_u, \mathcal{C}_v$ do not satisfy condition~\ref{cons:lxlx}. Again $l_y(u) = 0$ and $l_y(v) = 0$. Hence, $l_y(u) \neq r_y(v)$ and $l_y(v) \neq r_y(u)$. This implies $\mathcal{C}_u, \mathcal{C}_v$ do not satisfy conditions~\ref{cons:lyry} and~\ref{cons:ryly}. As $u, v$ are nonadjacent leaves, from the construction we have $[l_x(u), p_x(u)] \cap [l_x(v), p_x(v)] \neq \emptyset$. This implies $\mathcal{C}_u, \mathcal{C}_v$ do not satisfy condition~\ref{cons:lyly}. If $u, v \notin \{a_0, a_{k-1}\}$, from the construction we have $[l_x(u), q_x(u)] \cap [l_x(v), q_x(v)] \neq \emptyset$. If either of $u$ or $v$ belongs to $\{a_0, a_{k-1}\}$ then $r_y(u) \neq r_y(v)$. This implies $\mathcal{C}_u, \mathcal{C}_v$ do not satisfy condition~\ref{cons:ryry}.

This completes the proof of Claim~\ref{clm: halin2epg}.
\end{proof}

\begin{figure}[h]
\centering
\scalebox{0.6}
{
\begin{pspicture}(0,-3.0775)(5.61,3.0475)
\psdots[dotsize=0.12](2.8,-1.9625)
\psline[linewidth=0.02cm,dotsize=0.07055555cm 2.0]{*-}(2.8,0.0375)(2.8,-0.9625)
\psline[linewidth=0.02cm,dotsize=0.07055555cm 2.0]{*-}(2.8,-0.9625)(2.8,-1.9625)
\psline[linewidth=0.02cm,dotsize=0.07055555cm 2.0]{*-}(3.3,-2.6625)(2.8,-1.9625)
\psline[linewidth=0.02cm,dotsize=0.07055555cm 2.0]{*-}(1.8,-0.1625)(2.8,0.0375)
\psline[linewidth=0.02cm,dotsize=0.07055555cm 2.0]{*-}(3.8,-0.1625)(2.8,0.0375)
\psline[linewidth=0.02cm,dotsize=0.07055555cm 2.0]{*-}(3.4,1.0375)(2.8,0.0375)
\psline[linewidth=0.02cm,dotsize=0.07055555cm 2.0]{*-}(2.2,1.0375)(2.8,0.0375)
\psline[linewidth=0.02cm,dotsize=0.07055555cm 2.0]{*-}(2.3,-2.6625)(2.8,-1.9625)
\psline[linewidth=0.02cm,dotsize=0.07055555cm 2.0]{*-}(5.6,0.0375)(4.801983,-0.35836658)
\psline[linewidth=0.02cm,dotsize=0.07055555cm 2.0]{*-}(5.3,-1.0625)(4.801983,-0.35836658)
\psline[linewidth=0.02cm,dotsize=0.07055555cm 2.0]{*-}(4.8,-0.3625)(3.8,-0.1625)
\psline[linewidth=0.02cm,dotsize=0.07055555cm 2.0]{*-}(3.9,1.9375)(3.4,1.0375)
\psline[linewidth=0.02cm,dotsize=0.07055555cm 2.0]{*-}(1.7,1.9375)(2.2,1.0375)
\psline[linewidth=0.02cm,dotsize=0.07055555cm 2.0]{*-}(0.8,-0.3625)(1.8,-0.1625)
\psline[linewidth=0.02cm,dotsize=0.07055555cm 2.0]{*-}(4.7,2.3375)(3.9,1.9375)
\psline[linewidth=0.02cm,dotsize=0.07055555cm 2.0]{*-}(0.0,0.0375)(0.7928409,-0.38716066)
\psline[linewidth=0.02cm,dotsize=0.07055555cm 2.0]{*-}(0.8,-1.6625)(1.8,-0.1625)
\psline[linewidth=0.02cm,dotsize=0.07055555cm 2.0]{*-}(1.6,-2.3625)(2.8,-0.9625)
\psline[linewidth=0.02cm,dotsize=0.07055555cm 2.0]{*-}(4.0,-2.3625)(2.8,-0.9625)
\psline[linewidth=0.02cm,dotsize=0.07055555cm 2.0]{*-}(4.8,-1.6625)(3.8,-0.1625)
\psline[linewidth=0.02cm,dotsize=0.07055555cm 2.0]{*-}(5.5,0.7375)(3.8,-0.1625)
\psline[linewidth=0.02cm,dotsize=0.07055555cm 2.0]{*-}(5.1,1.7375)(3.4,1.0375)
\psline[linewidth=0.02cm,dotsize=0.07055555cm 2.0]{*-}(3.2,3.0375)(3.4,1.0375)
\psline[linewidth=0.02cm,dotsize=0.07055555cm 2.0]{*-}(2.4,3.0375)(2.2,1.0375)
\psline[linewidth=0.02cm,dotsize=0.07055555cm 2.0]{*-}(0.5,1.7375)(2.2,1.0375)
\psline[linewidth=0.02cm,dotsize=0.07055555cm 2.0]{*-}(0.1,0.7375)(1.8,-0.1625)
\psline[linewidth=0.02cm,dotsize=0.07055555cm 2.0]{*-}(1.7,2.9375)(1.7,1.9375)
\psline[linewidth=0.02cm,dotsize=0.07055555cm 2.0]{*-}(0.9,2.3375)(1.7,1.9375)
\psline[linewidth=0.02cm,dotsize=0.07055555cm 2.0]{*-}(3.9,2.9375)(3.9,1.9375)
\psline[linewidth=0.02cm,dotsize=0.07055555cm 2.0]{*-}(0.3,-1.0625)(0.8,-0.3625)
\psline[linewidth=0.02cm](3.9,2.9375)(4.7,2.3375)
\psline[linewidth=0.02cm](1.7,2.9375)(2.4,3.0375)
\psline[linewidth=0.02cm](0.9,2.3375)(1.7,2.9375)
\psline[linewidth=0.02cm](4.0,-2.3625)(4.8,-1.6625)
\psline[linewidth=0.02cm](4.8,-1.6625)(5.3,-1.0625)
\psline[linewidth=0.02cm](2.4,3.0375)(3.2,3.0375)
\psline[linewidth=0.02cm](3.2,3.0375)(3.9,2.9375)
\psline[linewidth=0.02cm](5.5,0.7375)(5.6,0.0375)
\psline[linewidth=0.02cm](5.6,0.0375)(5.3,-1.0625)
\psline[linewidth=0.02cm](0.8,-1.6625)(1.6,-2.3625)
\psline[linewidth=0.02cm](0.3,-1.0625)(0.8,-1.6625)
\psline[linewidth=0.02cm](0.1,0.7375)(0.0,0.0375)
\psline[linewidth=0.02cm](0.0,0.0375)(0.3,-1.0625)
\psline[linewidth=0.02cm](3.3,-2.6625)(4.0,-2.3625)
\psline[linewidth=0.02cm](1.6,-2.3625)(2.3,-2.6625)
\psline[linewidth=0.02cm](2.3,-2.6625)(3.3,-2.6625)
\psline[linewidth=0.02cm](4.7,2.3375)(5.1,1.7375)
\psline[linewidth=0.02cm](5.1,1.7375)(5.5,0.7375)
\psline[linewidth=0.02cm](0.9,2.3375)(0.5,1.7375)
\psline[linewidth=0.02cm](0.5,1.7375)(0.1,0.7375)
\rput(3.0545313,-0.2475){$v_0$}
\rput(3.05045313,-0.8475){$v_1$}
\rput(3.6045313,0.0525){$v_2$}
\rput(3.6045313,0.8525){$v_3$}
\rput(2.5045312,0.9525){$v_4$}
\rput(1.9545311,0.0525){$v_5$}
\rput(3.0545313,-1.8475){$v_7$}
\rput(1.6045312,-2.5475){$v_8$}
\rput(2.5045313,-2.8475){$v_9$}
\rput(3.5345312,-2.8475){$v_{10}$}
\rput(4.234531,-2.5475){$v_{11}$}
\end{pspicture} 
}
\caption{A Halin graph which has no $B_1$-EPG representation.}
\label{fig: fig02}
\end{figure}
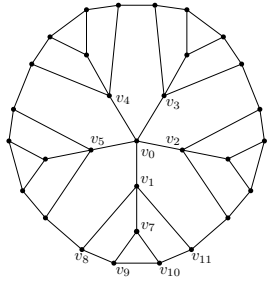

Now, we prove that there are Halin graphs that do not have $B_1$-EPG representations.

In a $B_1$-EPG representation, a one bend curve looks either like an L-shaped curve, or its rotations. Let us denote a one bend curve as $\mathcal{L}'(x_1, x_2, y_1, y_2, (x_i, y_j)) = \{(x,y)~|~y = y_j~\mbox{and}~x \in [x_1, x_2]\}\cup\{(x,y)~|~x=x_i~\mbox{and}~y\in [y_1,y_2]\}$, where $i, j\in \{1,2\}$. Clearly an L-shaped curve is $\mathcal{L}'(x_1, x_2, y_1, y_2, (x_1, y_1))$. Suppose that there is a $B_1$-EPG representation for a graph $H$ in which each vertex $u\in V(H)$ is represented by a one-bend curve $L'_u$. With respect to this representation, for any vertex $u\in V(H)$ define $\mathcal{L}'_u = \mathcal{L}'(x_1, x_2, y_1, y_2, (x_i, y_j))$, where $i, j \in \{1,2\}$. Also define $l_x(u)=x_1, r_x(u) = x_2, l_y(u)=y_1$ and $r_y(u)=y_2$. Let us denote the bend point $(x_i, y_j)$ by $\mathtt{b}_u$. Let us also denote the $x$-coordinate of $\mathtt{b}_u$ by $x(\mathtt{b}_u)$ and $y$-coordinate of $\mathtt{b}_u$ by $y(\mathtt{b}_u)$. It is easy to see that $\mathcal{L}'_u \cap \mathcal{L}'_v$ contains a horizontal or a vertical line segment of non-zero length if and only if $\mathcal{L}'_u, \mathcal{L}'_v$ satisfy at least one of the following conditions. 

\begin{enumerate}
\renewcommand{\theenumi}{(\arabic{enumi})}
\renewcommand{\labelenumi}{(\arabic{enumi})}
\item\label{cons:xx} $x(\mathtt{b}_u) = x(\mathtt{b}_v)$ and $\vert[l_y(u), r_y(u)] \cap [l_y(v), r_y(v)]\vert > 1$, or
\item\label{cons:yy} $y(\mathtt{b}_u) = y(\mathtt{b}_v)$ and $\vert[l_x(u), r_x(u)] \cap [l_x(v), r_x(v)]\vert > 1$
\end{enumerate} 

Since the curves $\{\mathcal{L}'_u\}_{u\in V(H)}$ form a valid $B_1$-EPG representation of the graph $H$, for every $u, v \in V(H)$, $uv \in E(H)$ if and only if $\mathcal{L}'_u, \mathcal{L}'_v$ satisfy at least one of the conditions~\ref{cons:xx} or \ref{cons:yy}.

First we prove the following observation for later use.

\begin{obs}
\label{obs:cont}
Suppose that the curves $\{\mathcal{L}'_{u_i}\}_{i\in \{1,2,3,4\}}$ form a $B_1$-EPG representation of the chordless cycle on four vertices denoted by $u_1u_2u_3u_4u_1$. If $\mathcal{L}'_{u_1}, \mathcal{L}'_{u_2}$ and $\mathcal{L}'_{u_1},\mathcal{L}'_{u_4}$ both satisfy condition~\ref{cons:xx} or both satisfy condition~\ref{cons:yy}, then $\mathtt{b}_{u_2}= \mathtt{b}_{u_4}$.
\end{obs}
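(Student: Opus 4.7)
The plan is to prove Observation~\ref{obs:cont} by exhaustive case analysis on how the vertex $u_3$ interacts with its neighbours $u_2$ and $u_4$. First, by the symmetry between conditions~\ref{cons:xx} and~\ref{cons:yy} (which amounts to interchanging the roles of the $x$- and $y$-axes), I would reduce to the case in which both $u_1u_2$ and $u_1u_4$ are witnessed by condition~\ref{cons:xx}, and handle the other case by the same argument up to this reflection. This reduction immediately gives $x(\mathtt{b}_{u_1}) = x(\mathtt{b}_{u_2}) = x(\mathtt{b}_{u_4})$, so the remaining goal is to show $y(\mathtt{b}_{u_2}) = y(\mathtt{b}_{u_4})$.

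Next, I would record the geometric consequences of the cycle's edges and non-edges restricted to the common vertical line $x = X := x(\mathtt{b}_{u_1})$. Writing $I_j = [l_y(u_j), r_y(u_j)]$ for the $y$-range of the vertical arm of $\mathcal{L}'_{u_j}$ sitting at $x = X$ (for $j \in \{1,2,4\}$), the two given edges force $|I_1 \cap I_2| > 0$ and $|I_1 \cap I_4| > 0$, while the non-edge $u_2u_4$ forces $|I_2 \cap I_4| \leq 0$ (otherwise condition~\ref{cons:xx} alone would make $u_2u_4$ an edge). Assume without loss of generality that $I_2$ lies below $I_4$, meaning $r_y(u_2) \leq l_y(u_4)$. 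A short calculation then shows that $I_1$ strictly sandwiches the gap: $l_y(u_1) < r_y(u_2) \leq l_y(u_4) < r_y(u_1)$.

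The main step is a four-way split according to which of conditions~\ref{cons:xx} and~\ref{cons:yy} witnesses the edges $u_2u_3$ and $u_3u_4$. In the subcase where both use~\ref{cons:xx}, the interval $I_3$ must overlap both $I_2$ and $I_4$ in positive length, so $l_y(u_3) < r_y(u_2)$ and $r_y(u_3) > l_y(u_4)$; combined with the sandwich relation above this forces $|I_1 \cap I_3| > 0$, contradicting that $u_1u_3$ is a non-edge. The borderline instance $r_y(u_2) = l_y(u_4)$ is handled by noting that both $I_1$ and $I_3$ contain an open neighbourhood of that common value. In the two mixed subcases (say $u_2u_3$ via~\ref{cons:xx} and $u_3u_4$ via~\ref{cons:yy}), the curve $\mathcal{L}'_{u_3}$ still has its vertical arm at $x = X$, and its bend must coincide with one of the two corners of $\mathcal{L}'_{u_4}$; I would then enumerate over which endpoint of $I_3$ serves as the bend $y$-coordinate and check that in every configuration either $I_3$ fails to overlap $I_2$ in positive length or it is forced to overlap $I_1$ in positive length, each yielding a contradiction. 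The only remaining possibility is that both edges use condition~\ref{cons:yy}, which directly gives $y(\mathtt{b}_{u_2}) = y(\mathtt{b}_{u_3}) = y(\mathtt{b}_{u_4})$ and closes the proof.

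The main obstacle I anticipate is the mixed-case analysis, since it requires bookkeeping over which of the four corners of each L-shape carries the bend, and care is needed to ensure no configuration is missed. By contrast, the pure~\ref{cons:xx} subcase is essentially a one-dimensional interval argument, with only the boundary equality $r_y(u_2) = l_y(u_4)$ requiring a separate local check.
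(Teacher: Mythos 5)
Your proposal is correct and follows essentially the same route as the paper: reduce by symmetry to the condition~\ref{cons:xx} case, note the three bends share an $x$-coordinate, derive the sandwich $l_y(u_1)<r_y(u_2)\leq l_y(u_4)<r_y(u_1)$, and then show both edges at $u_3$ must be witnessed by condition~\ref{cons:yy}. The only difference is organizational: you split on which condition witnesses $u_2u_3$ and $u_3u_4$ and contradict the non-edge $u_1u_3$, while the paper assumes $x(\mathtt{b}_{u_3})=x(\mathtt{b}_{u_1})$ and contradicts one of the edges $u_2u_3$, $u_3u_4$ — the same argument in mirror image.
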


\begin{proof}
Let us first consider the case when $\mathcal{L}'_{u_1}, \mathcal{L}'_{u_2}$ and $\mathcal{L}'_{u_1},\mathcal{L}'_{u_4}$ both satisfy condition~\ref{cons:xx}. In this case, we have $x(\mathtt{b}_{u_1})=x(\mathtt{b}_{u_2})=x(\mathtt{b}_{u_4})$. As $u_2$ and $u_4$ are nonadjacent, $\mathcal{L}'_{u_2}$ and $\mathcal{L}'_{u_4}$ do not satisfy condition~\ref{cons:xx}, which implies that $|[l_y(u_2),r_y(u_2)]\cap [l_y(u_4),r_y(u_4]|\leq 1$. Therefore, we have either $r_y(u_2)\leq l_y(u_4)$ or $r_y(u_4)\leq l_y(u_2)$. We can assume without loss of generality that $r_y(u_2)\leq l_y(u_4)$ (as the cycle can be relabeled to make this true if it is not already the case). Clearly, this implies $l_y(u_1)<r_y(u_2)\leq l_y(u_4)<r_y(u_1)$. Now suppose that $x(\mathtt{b}_{u_3}) = x(\mathtt{b}_{u_1})$ ( $= x(\mathtt{b}_{u_2}) = x(\mathtt{b}_{u_4})$). As $u_1$ and $u_3$ are nonadjacent, $\mathcal{L}'_{u_1}$ and $\mathcal{L}'_{u_3}$ should not satisfy condition~\ref{cons:xx}, implying that either $r_y(u_3)\leq l_y(u_1)$ or $r_y(u_1)\leq l_y(u_3)$. If $r_y(u_3)\leq l_y(u_1)$, we have by our previous observation that $r_y(u_3) < l_y(u_4)$. But this would imply that $\mathcal{L}'_{u_3}, \mathcal{L}'_{u_4}$ do not satisfy either condition~\ref{cons:xx} or condition~\ref{cons:yy}, which contradicts the fact that $u_3$ and $u_4$ are adjacent. Similarly, if $r_y(u_1)\leq l_y(u_3)$, we have $r_y(u_2) < l_y(u_3)$, which means that $\mathcal{L}'_{u_2}, \mathcal{L}'_{u_3}$ will not satisfy either condition~\ref{cons:xx} or condition~\ref{cons:yy}, which is a contradiction as $u_2$ and $u_3$ are adjacent. Therefore, we can conclude that $x(\mathtt{b}_{u_3}) \neq x(\mathtt{b}_{u_1})$. Since this means that $x(\mathtt{b}_{u_3}) \neq x(\mathtt{b}_{u_2})$ and $x(\mathtt{b}_{u_3})\neq x(\mathtt{b}_{u_4})$, it must be the case that both $\mathcal{L}'_{u_3}, \mathcal{L}'_{u_2}$ and $\mathcal{L}'_{u_3}, \mathcal{L}'_{u_4}$ satisfy condition~\ref{cons:yy}. Then $y(\mathtt{b}_{u_3}) = y(\mathtt{b}_{u_2})$ and $y(\mathtt{b}_{u_3}) = y(\mathtt{b}_{u_4})$ together implies $y(\mathtt{b}_{u_2}) = y(\mathtt{b}_{u_4})$. As $x(\mathtt{b}_{u_2}) = x(\mathtt{b}_{u_4})$ we conclude that $\mathtt{b}_{u_2} = \mathtt{b}_{u_4}$.
 
We can argue similarly for the case when $\mathcal{L}'_{u_1}, \mathcal{L}'_{u_2}$ and $\mathcal{L}'_{u_1}, \mathcal{L}'_{u_4}$ both satisfy condition~\ref{cons:yy}.
\end{proof}

We consider $H$ to be the Halin graph with thirty one vertices and fifty edges, shown in the Figure~\ref{fig: fig02}. We show that this graph has no $B_1$-EPG representation, thereby completing the proof the Claim~\ref{clm: halinb1}.

\begin{clm}
\label{clm: halinb1}
There exists a Halin graph that cannot be represented as an edge intersection graph of paths on a grid such that each path has at most one bend.
\end{clm}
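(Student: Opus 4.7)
The plan is to argue by contradiction. Suppose that the Halin graph $H$ of Figure~\ref{fig: fig02} has a $B_1$-EPG representation in which each vertex $u$ is drawn as a $1$-bend curve $\mathcal{L}'_u$ with bend point $\mathtt{b}_u$. Then every edge $uv$ of $H$ receives a \emph{type} in $\{\textrm{xx},\textrm{yy}\}$, according to whether condition~\ref{cons:xx} or condition~\ref{cons:yy} is satisfied by $\mathcal{L}'_u$ and $\mathcal{L}'_v$. Observation~\ref{obs:cont} will be the only structural tool used, applied to carefully chosen chordless $4$-cycles of $H$.

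The first step is to identify a rich family of chordless $4$-cycles in $H$. Each branch of $H$ contains an internal vertex $v$ (a child of $v_0$) together with an internal child $m$, where both $v$ and $m$ have leaf children on the outer cycle. Whenever $l$ is a leaf child of $v$ and $l'$ is a leaf child of $m$ that are consecutive on the outer cycle, the $4$-tuple $v,l,l',m$ spans a chordless $4$-cycle in $H$: the tree ensures $v\not\sim l'$ and $m\not\sim l$, and non-adjacency of $l,m$ and of $v,l'$ (which are at tree distance $2$) can be read off the cycle structure. The Halin graph in Figure~\ref{fig: fig02} is constructed so that many such chordless $4$-cycles exist and share common vertices.

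Then I would apply Observation~\ref{obs:cont} iteratively along these chordless $4$-cycles: whenever, in such a $4$-cycle $u_1 u_2 u_3 u_4 u_1$, the two edges $u_1 u_2$ and $u_1 u_4$ carry the same type, the opposite vertices $u_2$ and $u_4$ are forced to have identical bend points. By pigeonhole at the high-degree internal vertices of $H$ (in particular $v_0$ and the middle vertices of each branch, each of which lies in several chordless $4$-cycles), such matchings must occur, and chaining the resulting identifications forces three or more curves to share a common grid point. But each $1$-bend curve emanates from its bend point in only two perpendicular directions, so by pigeonhole two among the three (or more) curves sharing the bend point must extend in a common direction and hence share a segment of nonzero length; those two vertices are therefore adjacent in $H$. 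By arranging the chain so that the vertices thus identified are \emph{non-adjacent} in $H$, a contradiction is produced.

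The principal obstacle is the conditional nature of Observation~\ref{obs:cont}: each identification requires matching types on two specific edges, so the deduction does not go through on a single assignment but only after a case analysis on the finitely many type-patterns attainable at the distinguished vertices. This is exactly why the example in Figure~\ref{fig: fig02} is so large and symmetric: it contains enough chordless $4$-cycles sharing common vertices that, up to the symmetries of $H$, a small number of cases exhaust every possible type-assignment, and in each case pigeonhole at some internal vertex triggers the chain of identifications leading to the desired contradiction.
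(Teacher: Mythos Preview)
Your proposal assembles most of the right ingredients --- the chordless $4$-cycles $v_1v_8v_9v_7v_1$ and $v_1v_7v_{10}v_{11}v_1$ are exactly the ones the paper uses, and your endgame (three curves with a common bend point force an unwanted edge overlap by pigeonhole on the two vertical directions) is correct and matches the paper's conclusion. The gap is in the middle: you never explain how a pigeonhole argument produces the hypothesis of Observation~\ref{obs:cont} \emph{twice} at the same vertex. Note that $v_0$ lies in no chordless $4$-cycle of $H$ at all (any two of $v_1,\dots,v_5$ share only $v_0$ as a common neighbour), so ``pigeonhole at $v_0$'' does not directly feed Observation~\ref{obs:cont}. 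And at $v_1$, which has only the three relevant edges $v_1v_7,v_1v_8,v_1v_{11}$, pigeonhole gives just two of the same type; if those two happen to be $v_1v_8$ and $v_1v_{11}$ there is no chordless $4$-cycle through both, and even in the favourable cases you only get a single identification $\mathtt{b}_{v_7}=\mathtt{b}_{v_8}$ (say), which is not yet a contradiction. Your proposal waves at a case analysis over all type-patterns but does not carry it out, and it is not clear it closes without an additional idea.

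The paper supplies precisely that missing idea, and it is geometric rather than combinatorial. From pigeonhole at $v_0$ one gets three edges $v_0v_1,v_0v_2,v_0v_3$ of type~\ref{cons:yy}; since $\{v_1,v_2,v_3\}$ is independent, their $x$-intervals are pairwise (essentially) disjoint but each meets $[l_x(v_0),r_x(v_0)]$, so one of them --- rename it $v_1$ --- satisfies $[l_x(v_1),r_x(v_1)]\subsetneq[l_x(v_0),r_x(v_0)]$. This containment is the crux: if any of $v_7,v_8,v_{11}$ met $v_1$ via condition~\ref{cons:yy}, it would automatically meet $v_0$ via~\ref{cons:yy} as well, contradicting non-adjacency to $v_0$. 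Hence \emph{all three} edges $v_1v_7,v_1v_8,v_1v_{11}$ are forced to type~\ref{cons:xx}, Observation~\ref{obs:cont} applies to both $4$-cycles, and $\mathtt{b}_{v_7}=\mathtt{b}_{v_8}=\mathtt{b}_{v_{11}}$ follows with no case analysis. That containment step is what you are missing.
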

\begin{proof}
For the sake of contradiction assume that $H$ has a $B_1$-EPG representation. This implies, with every vertex $v\in V(G)$, we can associate a one bend curve $\mathcal{L}'_v$, such that $uv \in E(G)$ if and only if $\mathcal{L}'_u, \mathcal{L}'_v$ satisfy at least one of the conditions~\ref{cons:xx} or \ref{cons:yy}. Note that the vertex $v_0$ has five neighbours $v_1, v_2, \dots , v_5$. Clearly, each pair of curves $\mathcal{L}'_{v_0}, \mathcal{L}'_{v_i}$ satisfies at least one of the conditions~\ref{cons:xx} or \ref{cons:yy} for each value of $i\in\{1,2,\dots,5\}$. Note that we can assume that a majority of these pairs of curves satisfy condition~\ref{cons:yy} (as if that is not the case we can reflect the whole collection of curves along the line $x=y$ to obtain another valid representation of the graph in which this is true). So we assume without loss of generality that each pair $\mathcal{L}'_{v_0}, \mathcal{L}'_{v_i}$, for $1\leq i\leq 3$ satisfies condition~\ref{cons:yy}. That is, $y(\mathtt{b}_{v_0})=y(\mathtt{b}_{v_i})$ and $|[l_x(v_0),r_x(v_0)]\cap [l_x(v_i),r_x(v_i)]| > 1$, for each $i\in\{1,2,3\}$. As $\{v_1,v_2,v_3\}$ form an independent set in the graph, there exists a vertex, say $v_1$, such that $[l_x(v_1), r_x(v_1)] \subsetneq [l_x(v_0), r_x(v_0)]$. This means that if any pair of curves $\mathcal{L}'_{v_1}, \mathcal{L}'_{v_i}$, where $i\in\{7,8,11\}$, satisfy condition~\ref{cons:yy}, then $\mathcal{L}'_{v_0}, \mathcal{L}'_{v_i}$ also satisfy condition~\ref{cons:yy}, which is a contradiction as none of the vertices $v_7,v_8$ or $v_{11}$ are adjacent to $v_0$. Therefore, for each $i\in\{7,8,11\}$, the curves $\mathcal{L}'_{v_1}, \mathcal{L}'_{v_i}$ must satisfy the condition~\ref{cons:xx}. Consider the cycle $v_1v_8v_9v_7v_1$. Then from the Observation~\ref{obs:cont} we have $\mathtt{b}_{v_7} = \mathtt{b}_{v_8}$. Also $v_1v_7v_{10}v_{11}v_1$ form a cycle. Again applying Observation~\ref{obs:cont} we have $\mathtt{b}_{v_7} = \mathtt{b}_{v_{11}}$. This implies that $|[l_y(v_i),r_y(v_i)] \cap [l_y(v_j),r_y(v_j)]| > 1$ for some distinct $i,j \in \{7,8,11\}$. As we have $x(\mathtt{b}_{v_1})=x(\mathtt{b}_{v_7})=x(\mathtt{b}_{v_8})=x(\mathtt{b}_{v_{11}})$, this means that $\mathcal{L}'_{v_i}, \mathcal{L}'_{v_j}$ satisfy condition~\ref{cons:xx}, which is a contradiction to the fact that $\{v_7,v_8,v_{11}\}$ form an independent set in the graph.

Hence the proof.
\end{proof}

\begin{thm}
Tree-union-cycle graphs are in $B_2$-EPG. There are Halin graphs that are not in $B_1$-EPG.
\end{thm}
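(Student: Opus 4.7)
The plan is very short because this theorem is purely a summary of two results already established in the section as Claims. No new argument is needed; I would simply cite each claim for the corresponding half of the statement.

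For the positive direction (tree-union-cycle graphs are in $B_2$-EPG), the explicit construction earlier in the section assigned a C-shaped curve $\mathcal{C}_u$ to every vertex $u$ of a tree-union-cycle graph $G$, with the assignment differing according to whether $u$ is a leaf of $T$, an internal vertex other than $r$ and $a'$, or one of $r$ or $a'$. Claim~\ref{clm: halin2epg} then verified that $uv \in E(G)$ if and only if $\mathcal{C}_u \cap \mathcal{C}_v$ contains a horizontal or vertical segment of positive length, by matching adjacencies to one of the five intersection conditions~\ref{cons:lyly}--\ref{cons:lxlx} using Observations~\ref{obs: comp}, \ref{obs: inc}, and \ref{obs: leaves}. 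So the first sentence of the theorem follows by invoking Claim~\ref{clm: halin2epg} on an arbitrary tree-union-cycle graph.

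For the negative direction, Claim~\ref{clm: halinb1} exhibited the specific Halin graph $H$ of Figure~\ref{fig: fig02} and proved via Observation~\ref{obs:cont} and a case analysis on the bend-point coincidences forced around $v_0$ and $v_1$ that $H$ has no $B_1$-EPG representation. Citing this claim gives the second sentence of the theorem.

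The only thing to write is therefore a one-line proof of the form: \emph{The two parts are immediate from Claim~\ref{clm: halin2epg} and Claim~\ref{clm: halinb1}, respectively.} There is no obstacle to overcome here, since both the construction and the obstruction argument have already been carried out in detail; the theorem simply packages them into a single statement, mirroring the earlier VPG theorem whose proof was similarly a direct appeal to Claims~\ref{clm:halin1vpg} and~\ref{clm:halinnotb0vpg}.
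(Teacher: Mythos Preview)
Your proposal is correct and matches the paper's own proof exactly: the paper's proof is the single sentence ``The proof follows directly from Claims~\ref{clm: halin2epg} and~\ref{clm: halinb1},'' which is precisely what you describe.
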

\begin{proof}
The proof follows directly from Claims~\ref{clm: halin2epg} and~\ref{clm: halinb1}.
\end{proof}

\begin{cor}
If $H$ is any Halin graph, then $b_e(H)\leq 2$. This bound is tight.
\end{cor}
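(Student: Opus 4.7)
The plan is immediate: the corollary is a direct repackaging of the preceding theorem in terms of the bend-number parameter $b_e(\cdot)$. Since every Halin graph is, by definition, a tree-union-cycle graph (with the extra restriction that the underlying tree has no degree-$2$ vertex), the upper bound $b_e(H) \le 2$ follows from the first assertion of the theorem. Concretely, one would invoke Claim~\ref{clm: halin2epg}: for any Halin graph $H = T \cup C$, the explicit collection of C-shaped curves $\{\mathcal{C}_u\}_{u \in V(H)}$ constructed in Section~\ref{sec: b2} is a valid $B_2$-EPG representation, which by definition of $b_e$ gives $b_e(H) \le 2$.

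For tightness, it suffices to produce a single Halin graph whose EPG bend-number is at least $2$, i.e.\ one that admits no $B_1$-EPG representation. This is exactly the content of Claim~\ref{clm: halinb1}, which shows that the specific thirty-one-vertex graph depicted in Figure~\ref{fig: fig02} has no $B_1$-EPG representation. Hence $b_e \ge 2$ for that graph, matching the upper bound. The only point worth double-checking (and the closest thing to an obstacle, though it is merely visual) is that the witness of Claim~\ref{clm: halinb1} is really a Halin graph and not a strictly larger tree-union-cycle graph; an inspection of Figure~\ref{fig: fig02} confirms that every internal vertex of the underlying tree has degree at least three, so the graph is indeed Halin. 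Together these two points give $b_e(H) \le 2$ for every Halin graph $H$, with equality attained by the graph of Figure~\ref{fig: fig02}.
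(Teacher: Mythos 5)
Your proposal is correct and matches the paper's own route exactly: the upper bound comes from the $B_2$-EPG representation of tree-union-cycle graphs (Claim~\ref{clm: halin2epg}), and tightness comes from the Halin graph of Figure~\ref{fig: fig02} having no $B_1$-EPG representation (Claim~\ref{clm: halinb1}). Nothing further is needed.
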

\section{Concluding remarks}
We showed that the tree-union-cycle graph $G$ has a $B_2$-EPG representation using C-shaped curves. It can be seen that $G$ has a $B_2$-EPG representation using S-shaped curves too.

An S-shaped curve is defined as a set of points given by $\mathcal{S}(x_1, x_2, x_3, y_1, y_2) = \{(x, y)~\vert~x = x_2\ \mbox{and}\ y \in [y_1, y_2]\} \cup \{ (x, y)~\vert~ y = y_1\ \mbox{and}\ x \in [x_1, x_2]\} \cup \{(x, y)~\vert~ y = y_2\ \mbox{and}\ x \in [x_2, x_3]\}$. Our aim is to associate an S-shaped curve $\mathcal{S}_u$ to each vertex $u\in V(G)$ such that $uv\in E(G)$ if and only if $\mathcal{S}_u\cap \mathcal{S}_v$ contains at least a horizontal or a vertical line segment of non-zero length. For any vertex $u \in V(G)$ with $\mathcal{S}_u = \mathcal{S}(x_1,x_2,x_3,y_1,y_2)$, define $l_x(u)=x_1, m_x(u)=x_2, r_x(u) = x_3, l_y(u)=y_1$ and $r_y(u)=y_2$.

We choose the root $r$ and label the leaves $a_0,a_1,\ldots,a_{k-1}$ as described in the previous section. Define $h = \max \{ h^{r}(u)~|~u \in V(G)\}$. Let $\epsilon = 1/2h$.
\medskip

For every leaf $a_i$ other than $a_0$ or $a_{k-1}$, define $\mathcal{S}_{a_i} = \mathcal{S}(i-1-\epsilon, i, i+1 - \epsilon, 0, h - h^{r}(a_i)+1)$.

Define $\mathcal{S}_{a_0} = \mathcal{S}(-\epsilon, 0, k-1 + 3\epsilon, 0, h+1)$.

Define $\mathcal{S}_{a_{k-1}} = \mathcal{S}(k-2-\epsilon, k-1, k-1+\epsilon, 0, h+1)$.

For every internal vertex $v$ other than $r$ and $a'$, define $\mathcal{S}_v = \mathcal{S}(\min\{i~\vert~ a_i \in L^{r}(v)\}, \max\{i+(h-h^{r}(v))\epsilon \vert~ a_i \in L^{r}(v)\},\max\{i+1-\epsilon~\vert~ a_i \in L^{r}(v)\}, h-h^{r}(v), h-h^{r}(v)+1)$. 

If $a'\neq r$, define $\mathcal{S}_{a'} = \mathcal{S}(k-1-\epsilon, k-1+\epsilon, k-1+2\epsilon, 0, h - h^{r}(a')+1)$ and define $\mathcal{S}_r=\mathcal{S}(\epsilon,k-1+2\epsilon,k-1+3\epsilon,h,h+1)$.

If $a'=r$, define $\mathcal{S}_r=\mathcal{S}(\epsilon,k-1-\epsilon,k-1+\epsilon,h,h+1)$.

It is easy to see that $\mathcal{S}_u \cap \mathcal{S}_v$ contains a horizontal or a vertical line segment of non-zero length if and only if $\mathcal{S}_u, \mathcal{S}_v$ satisfies at least one of the following.
\begin{enumerate}
\renewcommand{\theenumi}{(\arabic{enumi})}
\renewcommand{\labelenumi}{(\arabic{enumi})}
\item\label{cons:slyly} $l_y(u) = l_y(v)$ and $\vert[l_x(u), m_x(u)] \cap [l_x(v), m_x(v)]\vert > 1$, or
\item\label{cons:sryry} $r_y(u) = r_y(v)$ and $\vert[m_x(u), r_x(u)] \cap [m_x(v), r_x(v)]\vert > 1$, or
\item\label{cons:slyry} $l_y(u) = r_y(v)$ and $\vert[l_x(u), m_x(u)] \cap [m_x(v), r_x(v)]\vert > 1$, or
\item\label{cons:sryly} $l_y(v) = r_y(u)$ and $\vert[l_x(v), m_x(v)] \cap [m_x(u), r_x(u)]\vert > 1$, or
\item\label{cons:smxmx} $m_x(u) = m_x(v)$ and $\vert[l_y(u), r_y(u)] \cap [l_y(v), r_y(v)]\vert > 1$
\end{enumerate} 

As we did in Section \ref{sec: b2}, it can be verified that for distinct $u, v \in V(G)$, $uv \in E(G)$ if and only if $\mathcal{S}_u, \mathcal{S}_v$ satisfy at least one of the conditions~\ref{cons:slyly} to~\ref{cons:smxmx}. Thus the collection of S-shaped curves $\{\mathcal{S}_v\}_{v\in V(G)}$ forms a $B_2$-EPG representation of the graph $G$.

\bibliographystyle{alpha}
\newcommand{\etalchar}[1]{$^{#1}$}

\end{document}